\documentclass[12pt]{amsart}

\usepackage{preamble}

\title[Topological Zeta Functions of Matroids]{Topological Zeta Functions of Matroids: \\ Operations and Computations}

\author{Dawit Mengesha}\address{Dawit Mengesha}\email{dym5466@psu.edu}
\author{Robert Miranda}\address{Robert Miranda}\email{robertmiranda@math.ucla.edu}
\author{Brian Sun}\address{Brian Sun}\email{bsun@berkeley.edu}

\begin{document}
\maketitle
\allowdisplaybreaks

\begin{abstract}
    The topological zeta function of a matroid is a rational function as well as a valuative invariant of the matroid, encoding rich combinatorial information. We analyze topological zeta functions of matroids from the vantage point of several matroid operations and operations on lattices of flats. We prove a clean recurrence relation for the M\"{o}bius inversion and use it to describe the topological zeta function of the truncation and free extension of a matroid in relation with that of the original matroid. We also characterize the Taylor coefficients of the topological zeta functions for matroids in terms of a matroid invariant, which we call the girth, and generalize an earlier result by Jensen--Kutler--Usatine.
\end{abstract}

\setcounter{tocdepth}{1}
\tableofcontents

\section{Introduction}
The topological zeta function of a complex polynomial is a singularity invariant of the corresponding hypersurface, introduced by Denef and Loeser \cite{denef1992caracteristiques}.  In \cite{feichtner2004incidence}, Feichtner and Kozlov developed the notion of combinatorial blowups of an atomic meet-semilattice, which enabled van der Veer \cite{van2019combinatorial} to define a topological zeta function for all such lattices, including geometric lattices. Let $M$ be a matroid on a ground set $E$. The topological zeta function $Z_M^\top(s)$ is a rational function associated to $M$, which is a specialization of the motivic zeta function of a matroid, introduced and studied in \cite{jensen2021motivic}. It encodes combinatorial data of the underlying matroid; for example, the first Taylor coefficient at $s=0$ is precisely $-|E|$ \cite[Theorem 1.12]{jensen2021motivic}. 

The topological zeta function $Z_M^{\top}(s)$ is defined as a function on the lattice of flats of $M$. As a result, we can take the M\"{o}bius inversion to obtain another rational function associated to $M$, which we denote by $Y_M(s)$. Our first main result states that $Y_M(s)$ satisfies a recurrence relation analogous to the one for $Z_M^{\top}(s)$, and we compute the M\"{o}bius inversion explicitly for uniform matroids.
\begin{thm}[Theorem~\ref{thm:upsilon-recurrence}]
    Let $M$ be a matroid over ground set $E$. Then
    \[
        Y_M(s) = \frac{-1}{|E|s + \rk(M)} \sum_{\substack{F \in \calL(M) \\ F \neq E}} (|E|s + \rk(F)) Y_{M|F}(s),
    \]
    where the sum is taken over all proper flats $F \in \calL(M)$.
\end{thm}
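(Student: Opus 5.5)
The plan is to reduce the statement to a single linear identity between $Z^\top$ and its M\"obius inversion, and then prove that identity from the recurrence for $Z^\top_M(s)$ recalled earlier in the paper. By definition of the M\"obius inversion on $\calL(M)$ we have $Z^\top_{M}(s) = \sum_{F \in \calL(M)} Y_{M|F}(s)$, and likewise for every restriction. Moving the right-hand side of the theorem to the left, the claim is equivalent to
\[
\sum_{F \in \calL(M)} (|E|s + \rk(F))\, Y_{M|F}(s) = 0 ,
\]
where $F = E$ is now included. Using the inversion formula, this in turn is equivalent to
\[
|E|s\, Z^\top_M(s) + \sum_{F \in \calL(M)} \rk(F)\, Y_{M|F}(s) = 0. \tag{$\ast$}
\]
So the work is to establish $(\ast)$ for every matroid.

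I will prove $(\ast)$ by induction on $\rk(M)$, or equivalently on the length of $\calL(M)$. The base case, the rank-zero matroid, is a direct check: the only flat has rank $0$, and $Z^\top$ is then given by the normalization in the definition. For the inductive step I substitute the recurrence for $Z^\top_M$, which expresses $(|E|s+\rk M)Z^\top_M$ as a weighted sum of the $Z^\top_{M|F}$ over proper flats $F$. I will use it in whatever explicit form it is stated earlier, with coefficients depending on $\rk F$, $|F|$ and the characteristic polynomial data of the interval $[F,E]$. Into this I substitute $Z^\top_{M|F} = \sum_{G \le F} Y_{M|G}$, interchange the order of summation, and collect the coefficient of each $Y_{M|G}$. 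By the same token, $Y_M$ itself equals $Z^\top_M - \sum_{G \ne E} Y_{M|G}$, so everything is expressed through the $Y_{M|G}$ with $G$ proper. For those terms the inductive hypothesis $(\ast)$, applied to the restrictions $M|F$, lets me rewrite the sums over lower intervals $[\emptyset, F]$.

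The main obstacle is the coefficient bookkeeping after the interchange. For a fixed $G$, the coefficient of $Y_{M|G}$ becomes a sum over the interval $[G,E]$ of the recurrence weights. This sum has to collapse to exactly $-(|E|s+\rk G)$, up to the global factor $|E|s + \rk M$. I expect this collapse to come from the defining property of the M\"obius function, $\sum_{G \le F \le E}\mu(F,E) = 0$ for $G \ne E$, together with the identity $\sum_{F \in [G,E]} \mu(G,F)\,\chi$-type evaluations being equal to $1$ or $0$. The rank-weighted version $\sum_F \rk(F)\mu(F,E)$ should be handled by writing $\rk F = \rk M - (\rk M - \rk F)$ and treating the second piece as a derivative of the characteristic polynomial at $1$. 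If this direct collapse is messy, my fallback is to work with the chain (flag) expansion of $Z^\top_M$ instead. There, $(\ast)$ should follow by splitting off the top step of each flag and telescoping the factor $1/(|F_i|s+\rk F_i)$ against $(|E|s+\rk F)$.
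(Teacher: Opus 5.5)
Your outline follows the paper's route: substitute Proposition~\ref{prop:zeta-recurrence}, write $Z^{\top}_{M|F}(s)=\sum_{G\subseteq F}Y_{M|G}(s)$, interchange the sums, and read off the coefficient of each $Y_{M|G}(s)$. The gap is that the one non-formal step, the collapse of that coefficient, is left as an expectation. The M\"obius facts you list are relevant, but you never assemble them into the identity that closes the argument. The recurrence weight is just $\overline{\chi}_{M/F}(1)$, with no dependence on $|F|$. So what you need is
\[
\sum_{\substack{F\in\calL(M)\\ G\subseteq F\subsetneq E}}\overline{\chi}_{M/F}(1)=\rk(M)-\rk(G)\qquad (G\in\calL(M),\ G\neq E),
\]
which the paper takes from \cite[Proposition 2.2]{jensen2021motivic} at $q=1$. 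To get it from M\"obius-function facts, you need $\sum_{G\subseteq F\subseteq H}\mu(F,H)=\delta_{G,H}$ for \emph{every} flat $H\supseteq G$, not just $H=E$. Inserting $\chi_{M/F}(q)=\sum_{H\supseteq F}\mu(F,H)q^{\rk(M)-\rk(H)}$ gives $\sum_{G\subseteq F\subseteq E}\chi_{M/F}(q)=q^{\rk(M)-\rk(G)}$. Differentiating at $q=1$, using $\chi_{M/F}'(1)=\overline{\chi}_{M/F}(1)$ for $F\neq E$ and $\chi_{M/E}=1$, yields the display. With it, $(|E|s+\rk(M))Z^{\top}_M(s)=\sum_{G\neq E}(\rk(M)-\rk(G))Y_{M|G}(s)$, and subtracting $(|E|s+\rk(M))\sum_{G}Y_{M|G}(s)$ gives the theorem at once. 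So the induction on rank, and the appeal to $(\ast)$ for the restrictions $M|F$, are unnecessary.

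Your remark about splitting $\rk(F)=\rk(M)-(\rk(M)-\rk(F))$ and reading the second piece as a derivative of a characteristic polynomial does give a slicker proof of $(\ast)$. It works, however, only with the M\"obius function anchored at the bottom flat rather than at $E$. Expand $Y$ in terms of $Z^{\top}$ in $(\ast)$, instead of the other way around. The coefficient of $Z^{\top}_{M|G}(s)$ in $\sum_F\rk(F)Y_{M|F}(s)$ is $\sum_{F\supseteq G}\mu(G,F)\rk(F)$. Since $\chi_{M/G}(q)=\sum_{F\supseteq G}\mu(G,F)q^{\rk(M)-\rk(F)}$, evaluating $\chi_{M/G}$ and its derivative at $q=1$ shows this coefficient equals $\rk(M)$ if $G=E$ and $-\overline{\chi}_{M/G}(1)$ if $G\neq E$. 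Hence $(\ast)$ reads $(|E|s+\rk(M))Z^{\top}_M(s)=\sum_{G\neq E}\overline{\chi}_{M/G}(1)Z^{\top}_{M|G}(s)$, which is exactly Proposition~\ref{prop:zeta-recurrence}. This version needs only the M\"obius-function definition of $\chi$ and bypasses \cite[Proposition 2.2]{jensen2021motivic}. The paper's version instead relies on that proposition to do the coefficient collapse.
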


Our second result is to use the formula above for the $Y_M(s)$ to describe $Z_M^{\top}$ under the operations of truncation $\tr(M)$ and free extension $M + e$. 

\begin{thm}[Theorem~\ref{thm:truncation-zeta}]
    Let $M$ be a matroid over ground set $E$. The topological zeta function for the truncation $\tr(M)$ can be written in terms of $Z^{\top}_M(s)$ and its M\"{o}bius inversion $Y_M(s)$ as
    \[
        Z^{\top}_{\tr(M)}(s) = Z^{\top}_M(s) + \frac{1}{|E|s + \rk(M) - 1} Y_M(s).
    \]
\end{thm}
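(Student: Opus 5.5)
The plan is to work entirely with the Möbius inversion and never return to the defining recurrence for $Z^{\top}$. I take the Möbius inversion to be normalized so that
\[
Z^{\top}_M(s)=\sum_{F\in\calL(M)} Y_{M|F}(s),
\]
the sum running over all flats of $M$, including $E$ itself. Equivalently, $Y_M(s)=\sum_{F}\mu(F,E)\,Z^{\top}_{M|F}(s)$. Then everything follows from Theorem~\ref{thm:upsilon-recurrence} applied twice, once to $M$ and once to $\tr(M)$. Throughout, let $r=\rk(M)$ and assume $r\ge 1$, so that $\tr(M)$ makes sense.

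First I record the combinatorics of truncation. The lattice $\calL(\tr(M))$ consists of the flats of $M$ of rank at most $r-2$, together with $E$. For each proper flat $G$ of $\tr(M)$, the restriction $\tr(M)|G$ equals $M|G$. Hence $Y_{\tr(M)|G}=Y_{M|G}$ for these $G$, since $Y$ depends only on the restricted matroid. Writing $\mathcal H$ for the set of hyperplanes of $M$, the expansion of $Z^{\top}$ in terms of $Y$ gives
\[
Z^{\top}_{\tr(M)}-Z^{\top}_M = Y_{\tr(M)} - Y_M - \sum_{H\in\mathcal H} Y_{M|H}.
\]

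Next I apply Theorem~\ref{thm:upsilon-recurrence}. The ground set is the same for both matroids, and $\rk(\tr(M))=r-1$. For $M$ and for $\tr(M)$ the recurrence reads
\[
(|E|s+r)Y_M=-\sum_{F\ne E}(|E|s+\rk F)\,Y_{M|F},
\qquad
(|E|s+r-1)Y_{\tr(M)}=-\sum_{\rk F\le r-2}(|E|s+\rk F)\,Y_{M|F}.
\]
The two sums differ exactly by the hyperplane terms, each with coefficient $|E|s+r-1$. Subtracting therefore gives
\[
(|E|s+r-1)\Bigl(Y_{\tr(M)}-\sum_{H\in\mathcal H}Y_{M|H}\Bigr)=(|E|s+r)\,Y_M .
\]
Substituting this into the displayed difference yields
\[
Z^{\top}_{\tr(M)}-Z^{\top}_M = Y_M\Bigl(\tfrac{|E|s+r}{|E|s+r-1}-1\Bigr)=\tfrac{1}{|E|s+r-1}\,Y_M,
\]
which is the claimed formula.

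The main point of care is bookkeeping rather than any deep obstacle. I need to check that the normalization of $Y_M$ used in Theorem~\ref{thm:upsilon-recurrence} is exactly $Z^{\top}_M=\sum_F Y_{M|F}$, with the top flat included. I also need to check that restrictions of $\tr(M)$ to proper flats coincide with those of $M$; this includes the ground set of each restriction, which matters because $Y$ of a restriction uses its own ground-set size internally. Finally, the degenerate case $r=1$ needs a quick look: there $\tr(M)$ has only the flat $E$ (the closure of $\emptyset$ being $E$). The recurrence for $\tr(M)$ then has an empty sum apart from any loop-flat subtleties, which I would verify directly against the base case of the definition.
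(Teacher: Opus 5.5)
Your proof is correct, and it takes a different route from the paper's.

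\textbf{The paper's route.} It applies the zeta recurrence (Proposition~\ref{prop:zeta-recurrence}) to $\tr(M)$. It then rewrites each coefficient $\overline{\chi}_{\tr(M)/F}(1)$ as $\overline{\chi}_{M/F}(1)+\chi_{M/F}(0)$, using $\tr(M)/F=\tr(M/F)$ together with the truncation formula for characteristic polynomials (Lemmas~\ref{lem:truncation-flats} and~\ref{lem:truncation-char-poly}). This coefficient vanishes on hyperplanes, so the sum extends to all proper flats of $M$. Running the zeta recurrence backwards for $M$ then produces $Y_M$ through the identification $\chi_{M/F}(0)=\mu(F,E)$.

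\textbf{Your route.} You apply the already-proved $Y$-recurrence (Theorem~\ref{thm:upsilon-recurrence}) to both $M$ and $\tr(M)$ and subtract. The only facts about truncation you need are that $\calL(\tr(M))$ is $\calL(M)$ with the hyperplanes removed, and that restrictions to flats of rank at most $\rk(M)-2$ are unchanged.

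\textbf{Comparison.} Your route is shorter. It bypasses the contraction half of Lemma~\ref{lem:truncation-flats}, the characteristic-polynomial lemma, and the step $\chi_{M/F}(0)=\mu(F,E)$. It also shows where the denominator $|E|s+\rk(M)-1$ comes from: it is the common coefficient of every hyperplane term in the $Y$-recurrence, which is exactly the normalizing factor in the recurrence for $\tr(M)$. The paper's route works directly from the zeta recurrence. It does not pass through Theorem~\ref{thm:upsilon-recurrence}, and so does not use Proposition~2.2 of \cite{jensen2021motivic}, which your argument inherits through that theorem.

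Your concern about $\rk(M)=1$ needs no special treatment. In that case $\calL(\tr(M))=\{E\}$ and $\tr(M)$ consists of loops, so $Z^{\top}_{\tr(M)}=Y_{\tr(M)}=0$. The $Y$-recurrence for $\tr(M)$ reads $|E|s\,Y_{\tr(M)}=0$, and your identities hold as written.
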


\begin{thm}[Theorem~\ref{thm:free-extension-zeta}]
    Let $M$ be a matroid over ground set $E$. The topological zeta function for the free extension $M + e$ can be written in terms of $Z^{\top}_M(s)$ and its M\"{o}bius inversion $Y_M(s)$ as
    \[
        Z^{\top}_{M+e}(s) = \frac{1}{s+1} \left(Z^{\top}_M(s) - \frac{s}{(|E|+1)s + \rk(M)} Y_M(s) \right).
    \]
\end{thm}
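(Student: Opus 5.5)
The idea is to describe the lattice of flats of $M+e$ explicitly in terms of $\calL(M)$, plug this into the defining recurrence for $Z^{\top}$, and collapse the resulting sums with the recurrence for $Y_M$ (Theorem~\ref{thm:upsilon-recurrence}) and, where convenient, Theorem~\ref{thm:truncation-zeta}.

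Throughout, write $r=\rk(M)$ and use $Z^{\top}_M=\sum_{F\in\calL(M)}Y_{M|F}$, which is the Möbius inversion relation defining $Y_M$. The first step is combinatorial. Since $e$ is freely added, the closure of $F\cup\{e\}$ is the flat $F\cup\{e\}$ of rank $\rk(F)+1$ whenever $\rk(F)\le r-2$. If $\rk(F)\ge r-1$, then $F\cup\{e\}$ spans. Hence the flats of $M+e$ are of three kinds:
\begin{itemize}
    \item the proper flats $F$ of $M$;
    \item the sets $F\cup\{e\}$ with $F\in\calL(M)$ and $\rk(F)\le r-2$;
    \item the top flat $E\cup\{e\}$.
\end{itemize}
The restrictions behave simply:
\begin{itemize}
    \item $(M+e)|F=M|F$;
    \item $(M+e)|(F\cup\{e\})\cong M|F\oplus U_{1,1}$, since $e$ is a coloop there.
\end{itemize}

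Next, I would establish (or quote, if it follows directly from the definition) the coloop rule $Z^{\top}_{N\oplus U_{1,1}}(s)=\frac{1}{s+1}Z^{\top}_N(s)$. I would also need its Möbius-inverted form for $Y$. For that, observe that the lattice of $N\oplus U_{1,1}$ is $\calL(N)\times\{0,1\}$. The inversion over the second factor splits off the difference between "with $e$" and "without $e$" terms.

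With these in hand, I would write $Z^{\top}_{M+e}$ via its recurrence over proper flats, with the normalizing factor $(|E|+1)s+r$, and split the sum into the three families above. The first family reproduces exactly the proper-flat terms in the recurrence for $Z^{\top}_M$, except that the weights involve $(|E|+1)s$ in place of $|E|s$. The second family, after the coloop rule, gives $\frac{1}{s+1}$ times a sum over the flats of $M$ of rank $\le r-2$. That set is precisely the proper flats of $\tr(M)$, so this piece can be traded for $Z^{\top}_{\tr(M)}$, and hence by Theorem~\ref{thm:truncation-zeta} for $Z^{\top}_M$ and $Y_M$.

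Finally, I would collect terms. The mismatch between the weights $|E|s+\rk(F)$ and $(|E|+1)s+\rk(F)$ produces an extra sum $\sum_{F\neq E} s\,(\cdots)$. Expressed through $Z^{\top}_M=\sum_F Y_{M|F}$, this sum is exactly of the shape appearing in Theorem~\ref{thm:upsilon-recurrence}. Substituting that theorem should collapse the extra sum into a multiple of $Y_M$, leaving the factor $\frac{s}{(|E|+1)s+r}$.

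I expect the main obstacle to be this bookkeeping. The recurrence for $Z^{\top}$ is weighted by $\rk(F)$ and by characteristic-polynomial data, and $\rk$ shifts by one on the flats $F\cup\{e\}$. Matching these shifted weights against the rank-$(r-1)$ flats that were removed in $\tr(M)$ is delicate. If the direct route becomes unwieldy, I would argue on the $Y$ side instead. I would first prove $Y_{M+e}=\frac{-s}{(s+1)((|E|+1)s+r)}Y_M$, by induction on $|E|$ using Theorem~\ref{thm:upsilon-recurrence} and the flat description. Summing over flats would then give the stated formula for $Z^{\top}_{M+e}$.
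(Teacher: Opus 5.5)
Your proposal has a genuine gap: the key bookkeeping claim in the direct route is false, and the fallback identity is false. In the direct route, Proposition~\ref{prop:zeta-recurrence} attaches to each proper flat $G$ of $M+e$ the coefficient $\overline{\chi}_{(M+e)/G}(1)$, and you never compute these contractions. They are not those of $M$. For a proper flat $F$ of $M$ one has $(M+e)/F=(M/F)+e$, so the coefficient is $\overline{\chi}_{(M/F)+e}(1)=-\chi_{M/F}(0)=-\mu(F,E)$, not $\overline{\chi}_{M/F}(1)$. Already for $M=U_{2,2}$ and $F=\varnothing$ these are $-1$ and $0$. So the first family does not reproduce the recurrence for $Z^{\top}_M$. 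The ``weight mismatch'' sum you plan to collapse with Theorem~\ref{thm:upsilon-recurrence} never arises either: the coefficients of the $Z^{\top}$-recurrence are the integers $\overline{\chi}_{(M+e)/G}(1)$ times one global prefactor $\frac{1}{(|E|+1)s+r}$, and weights of the form $|E|s+\rk(F)$ occur only in the $Y$-recurrence. With the correct coefficients the route does close, and without Theorem~\ref{thm:upsilon-recurrence}. The first family is $\sum_{F\neq E}-\mu(F,E)Z^{\top}_{M|F}=Z^{\top}_M-Y_M$ by Definition~\ref{def:upsilon}. For the second family you need $(M+e)/(F\cup\{e\})=\tr(M/F)$, which is true; with it the family equals $\frac{|E|s+r-1}{s+1}Z^{\top}_{\tr(M)}=\frac{(|E|s+r-1)Z^{\top}_M+Y_M}{s+1}$ by Theorem~\ref{thm:truncation-zeta}. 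Adding the two families gives $((|E|+1)s+r)Z^{\top}_{M+e}=\frac{((|E|+1)s+r)Z^{\top}_M-sY_M}{s+1}$, which is the statement.

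Your fallback identity $Y_{M+e}=\frac{-s}{(s+1)((|E|+1)s+r)}Y_M$ fails already for $M=U_{1,1}$. There the left side is $Y_{U_{1,2}}=\frac{1}{2s+1}-1=\frac{-2s}{2s+1}$, while the right side is $\frac{s^2}{(s+1)^2(2s+1)}$. Summing $Z^{\top}_{M+e}=\sum_{G}Y_{(M+e)|G}$ over your three families of flats shows that the true relation carries a hyperplane term: $Y_{M+e}=\frac{|E|s+r}{(s+1)((|E|+1)s+r)}Y_M-\frac{s}{s+1}\sum_{H}Y_{M|H}$, with $H$ ranging over the hyperplanes of $M$. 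You appear to have conflated the correction term in the statement with $Y_{M+e}$ itself. The idea the paper uses is the global identity $M+e=\tr(M\oplus U_{1,1})$, checked on rank functions via $\min\{\rk_{M\oplus U_{1,1}}(S),r\}$. Your observation that $e$ is a coloop of each $(M+e)|(F\cup\{e\})$ sees this only flat by flat. Apply Theorem~\ref{thm:truncation-zeta} to $M\oplus U_{1,1}$, which has $|E|+1$ elements and rank $r+1$. Together with $Z^{\top}_{M\oplus U_{1,1}}=\frac{1}{s+1}Z^{\top}_M$ and $Y_{M\oplus U_{1,1}}=Y_{U_{1,1}}Y_M=-\frac{s}{s+1}Y_M$, this gives the statement in one line.
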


Our third main result is a characterization of the low order Taylor coefficients of a topological zeta function in the expansion at $s = 0$, in terms of the girth of the matroid.\footnote{In particular, this generalizes a result from an unpublished REU project led by Max Kutler on the Taylor coefficients of a uniform matroid.} This result also generalizes an earlier result in \cite[Theorem 1.12]{jensen2021motivic} to Taylor coefficients higher than $1$. 
\begin{thm}[Theorem~\ref{thm:girth}]
    Let $M$ be a matroid over a ground set $E$ of girth $g > 0$. Then for all integers $0 \leqslant k < g$, the derivative of the topological zeta function evaluated at $0$ is
    \[
        \left(\frac{d^k}{ds^k}Z^{\top}_M(s) \right) \bigg|_{s=0} = (-1)^k |E|^{\overline{k}},
    \]
    where $|E|^{\overline{k}} = |E|(|E|+1) \cdots(|E| + k-1)$ is the rising $k^{th}$ factorial of $|E|$.
\end{thm}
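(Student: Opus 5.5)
The plan is to prove the stronger statement that, as power series at $s=0$,
\[
Z^{\top}_M(s) \equiv (1+s)^{-|E|} \pmod{s^{g}}.
\]
This suffices, because the $k$-th derivative of $(1+s)^{-n}$ at $0$ is $(-1)^k n(n+1)\cdots(n+k-1) = (-1)^k n^{\overline{k}}$. The reference series $(1+s)^{-|E|}$ should be exactly the topological zeta function of the Boolean matroid on $E$. To confirm this, I would compute the Boolean case directly from the recursive definition of $Z^{\top}$ (or its flag-sum form). There every subset is a flat and the sum factors as a product over the elements; the check for $U_{1,1}$ is $Z^{\top}=1/(1+s)$. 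Note that the Boolean matroid has girth $\infty$, and the statement says that a matroid of girth $g$ "looks Boolean to order $g-1$".

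\textbf{Step 1: the flag sum.} Write $Z^{\top}_M(s)$ as a sum over chains of flats $\emptyset \subsetneq F_1 \subsetneq \cdots \subsetneq F_m = E$. Each chain contributes a product of factors $\frac{\rk(F_i)}{|F_i|s + \rk(F_i)}$ (or similar) times a combinatorial weight, namely a characteristic polynomial or Möbius coefficient of the minors $M|F_{i}/F_{i-1}$. I would then do the same for the Boolean matroid $B_E$.

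\textbf{Step 2: match the two sums up to order $g$.} The key input from the girth is that every set of size $< g$ is independent. Hence every flat $F$ with $\rk(F)\le g-2$ is independent, so $F$ and its interval $[\emptyset,F]$ are Boolean, exactly as in $B_E$.

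I would organize the comparison via the recurrence Theorem~\ref{thm:upsilon-recurrence} and the analogous recurrence for $Z^{\top}$. The induction is on $|E|$ (equivalently on rank), and the girth of every restriction $M|F$ is at least $g$, so induction applies to all restrictions. Expand each factor $\frac{1}{|F|s+\rk F} = \frac{1}{\rk F}\sum_j (-|F|s/\rk F)^j$. The difference $Z^{\top}_M - Z^{\top}_{B_E}$ should then be a sum over dependent flats $F$ (those containing a circuit, so $|F|\ge g$ and $\rk F\ge g-1$) of terms that I want to show are $O(s^{g})$.

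\textbf{Main obstacle.} The hardest step is the vanishing modulo $s^g$ of each dependent-flat contribution. I expect the local obstruction to be a circuit $C$ of size $c\ge g$. In the zeta function of $U_{c-1,c}$ versus $B_C$, the difference should be divisible by $s^{c}$, or at least $s^{c-1}$ with a cancelling factor. My first attempt would be to verify this for $U_{c-1,c}$ explicitly, using the paper's truncation formula Theorem~\ref{thm:truncation-zeta}, since $U_{c-1,c} = \tr(B_C)$. The correction term $\frac{1}{cs+c-1}Y_{B_C}(s)$ should have $Y_{B_C}(s)$ divisible by $s^{c}$, as a Möbius inversion of $(1+s)^{-|F|}$ over the Boolean lattice it is an alternating sum of $(1+s)^{-j}$ and hence equals $\pm\bigl(1-(1+s)^{-1}\bigr)^{c}$-type expressions. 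I would then propagate this divisibility through the recurrence: every term involving a dependent flat factors through such a $Y$-type difference of order $\ge g$, while all lower-order terms coincide with the Boolean ones. If the direct propagation is messy, the fallback is valuativity. Express $M$ in the valuative group modulo matroids whose zeta functions agree with Boolean to order $g$, using that all matroids of girth $\ge g$ share the same rank-$(g-2)$ truncation of the flat lattice.
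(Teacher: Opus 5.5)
\textbf{There is a genuine gap: the step you label the main obstacle is the whole theorem, and the mechanism you sketch for it does not work as stated.}

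What is correct: your reformulation $Z^{\top}_M(s)\equiv(1+s)^{-|E|}\pmod{s^{g}}$ is equivalent to the theorem. The side facts also hold: flats of rank $\leqslant g-2$ are independent, restrictions inherit the lower bound on circuit sizes, and Theorem~\ref{thm:truncation-zeta} gives $Z^{\top}_{U_{c-1,c}}(s)-(1+s)^{-c}=\frac{1}{cs+c-1}\bigl(\frac{-s}{1+s}\bigr)^{c}$.

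What fails: there is no decomposition of $Z^{\top}_M-Z^{\top}_{B_E}$ into contributions of dependent flats, for two reasons.
\begin{itemize}
\item The index sets differ. Every non-closed subset of $E$ (e.g.\ a $(g-1)$-subset of a $g$-element circuit) is a flat of $B_E$ but not of $M$.
\item More seriously, the weights in the recurrence of Proposition~\ref{prop:zeta-recurrence} differ already at the smallest flats. For $B_E$ one has $\overline{\chi}_{B_E/F}(1)=0$ unless $F$ is a coatom. For $M=U_{2,3}$, the flat $\varnothing$ has weight $\overline{\chi}_M(1)=-1$ and each atom has weight $1$.
\end{itemize}
So the low-order terms do not coincide with the Boolean ones term by term. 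Agreement through order $g-1$ comes from a global cancellation; for instance, in Theorem~\ref{thm:upsilon-recurrence} for $U_{2,3}$ the $s^{1}$-terms from $F=\varnothing$ and from the three atoms cancel.

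Your two proposed routes do not close this. Propagating divisibility via the $Y$'s, e.g.\ through $Z^{\top}_M=\sum_F Y_{M|F}$, would require $Y_{M|F}=O(s^{g})$ for all flats of rank $\geqslant g$, together with $Y_{M|F}=(-1)^{g-1}|\calB(M|F)|s^{g-1}+O(s^{g})$ for flats of rank $g-1$. That is a form of Conjecture~\ref{conj:upsilon}, and it does not follow from your $U_{c-1,c}$ computation. The valuative fallback presupposes that these coefficients are determined by $\calL(M)_{\leqslant g-2}$. That is the statement to be proved, and for general matroids it is the open Conjecture~\ref{conj:truncation}.

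\textbf{The missing idea.} Stop comparing with $B_E$ flat by flat, and instead verify directly that $(1+s)^{-|E|}$ satisfies the recurrence of Proposition~\ref{prop:zeta-recurrence} modulo $s^{g}$. The key input is
\[
\sum_{\substack{F\in\calL(M)\\ \cl(A)\subseteq F\subsetneq E}}\overline{\chi}_{M/F}(1)=\rk(M)-\rk(A),
\]
which is Proposition~\ref{prop:two-flats} at $q=1$. Put $x=-s/(1+s)$. Then $(1+s)^{-|F|}=(1+x)^{|F|}=\sum_{A\subseteq F}x^{|A|}$ (the Boolean M\"obius expansion you noticed), and $x^{|A|}=O(s^{|A|})$. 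Insert the inductive hypothesis $Z^{\top}_{M|F}\equiv\sum_{A\subseteq F,\,|A|<g}x^{|A|}$ for the proper flats, exchange the sums, and use that every $A$ with $|A|<g$ is independent. This gives
\begin{align*}
(|E|s+\rk(M))\,Z^{\top}_M(s) &\equiv \sum_{\substack{A\subseteq E\\ |A|<g}}(\rk(M)-|A|)\,x^{|A|} \equiv \rk(M)(1+x)^{|E|}-|E|\,x\,(1+x)^{|E|-1} \\
&= (|E|s+\rk(M))(1+s)^{-|E|} \pmod{s^{g}},
\end{align*}
and $|E|s+\rk(M)$ is a unit in $\Q[[s]]$, which closes the induction.

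The paper's proof is this same computation carried out coefficientwise:
\begin{itemize}
\item Lemma~\ref{prop:k-derivative} supplies the recurrence.
\item Lemma~\ref{lemma:power-E} rewrites powers of $|F|$ through the counts $|\calD^{j}_{j}(M|F)|$ of small independent sets; this is where the girth enters.
\item Lemma~\ref{prop:flat-sum} evaluates the flat sum.
\item Lemma~\ref{lemma:stirling-both} plays the role of the identity $j\binom{n}{j}=n\binom{n-1}{j-1}$ used in the last step above.
\end{itemize}
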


 One interpretation for the girth of the matroid is that it is the largest rank up to which truncation returns a uniform matroid. Thus our theorem is a special case of the following conjecture when the truncated matroid is uniform.

\begin{conjecture}[Conjecture~\ref{conj:truncation}]
    Let $M$ be a matroid of rank $r > 0$. Then for all integers $0 \leqslant k < r$ we have
    \[
        \left(\frac{d^k}{ds^k} Z^{\top}_M(s)\right) \bigg|_{s=0} = \left(\frac{d^k}{ds^k} Z^{\top}_{\tr(M)}(s) \right) \bigg|_{s=0}.
    \]
\end{conjecture}

A proof of this conjecture would give a combinatorial interpretation of the low order Taylor coefficients of the topological zeta function $Z_M^{\top}(s)$ in terms of the lattice of flats of $M$. In terms of the M\"{o}bius inversion of the topological zeta function, we can strengthen our conjecture.

\begin{conjecture}[Conjecture~\ref{conj:upsilon}]
    Let $M$ be a matroid of rank $r$. Then the Taylor expansion of $Y_M(s)$ centered at $s=0$ is
    \[
        Y_M(s) = (-1)^r |\calB(M)| s^r + O(s^{r+1}).
    \]
\end{conjecture}

\subsection{Acknowledgement} This project was completed as part of the 2020 Summer Undergraduate Mathematics Research at Yale (SUMRY) program. We are grateful to our mentors, Max Kutler and Shiyue Li, for their guidance throughout the project. We also thank all the organizers and participants for creating a stimulating and inspiring mathematical environment even as the world navigates the start of a pandemic.

\section{Preliminaries}
\label{sec:prelim}
We invite readers to see \cite{oxley2011book, white1987combinatorial} for general background on matroids. Here we outline our notation. Let $M$ be a loopless matroid on a ground set $E$. We set
\begin{align*}
    \calB(M) &:= \text{ the set of bases of } M, \\
    \calL(M) &:= \text{ the lattice of flats of } M.
\end{align*}
In particular, $\calL(M)_r$ will denote the sublattice consisting of all flats $F \in \calL(M)$ of rank $\rk_M(F) = r$, and $\calL(M)_{\leqslant r}$ all flats of rank $\rk_M(F) \leqslant r$. Given a flat $F \in \calL(M)$, we set
\begin{align*}
    M|F &\coloneqq \text{the restriction of } M \text{ at } F, \\
    M/F &\coloneqq \text{the contraction of } M \text{ at } F.
\end{align*}
Given a flag of flats
\[
    \calF = (\varnothing = F_0 \subsetneq F_1 \subsetneq \dotsc \subsetneq F_k = E),
\]
in $\calL(M)$, the \textbf{degeneration of $M$ at $\calF$} is the matroid
\[
    M_{\calF} = \bigoplus_{i=1}^k M | F_i / F_{i-1}.
\]
Finally, the characteristic polynomial of $M$ is denoted as $\chi_M(q)$.

Now we are ready to define the topological zeta function.
\begin{definition}[\cite{vanderveer2019, jensen2021motivic}]
\label{def:zeta-flags}
    Let $M$ be a matroid. The \textbf{topological zeta function} $Z_M^{\top}(s)$ of $M$ is defined as follows:
    \begin{itemize}
        \item If $M$ is the trivial matroid, then $Z_M^\top(s)\coloneqq 1$.
        \item If $M$ contains loops, then $Z_M^{\top}(s)\coloneqq 0$.
        \item If $M$ is loopless, then 
        \[
            Z_M^{\top}(s) := \sum_{\calF} \frac{\chi_{M_{\calF}}(q)}{(q-1)^{\len(\calF)}} \Bigg|_{q=1} \ \prod_{\substack{F \in \calF \\ F \neq \varnothing}} \frac{1}{|F|s + \rk(F)} \in \Q(s),
        \]
        where the sum is taken over all flags $\calF$ in the lattice of flats $\calL(M)$ and $\chi$ is the characteristic polynomial of the degeneration $M_{\calF}$.
    \end{itemize}
\end{definition}

To compute $Z^{\top}_M(s)$ for a given matroid, we will often use the following recurrence relation in terms of the topological zeta functions of the restrictions of $M$.

\begin{proposition}[\cite{jensen2021motivic}, Corollary 7.2]
\label{prop:zeta-recurrence}
    Let $M$ be a matroid over ground set $E$. Then
    \[
        Z^{\top}_M(s) = \frac{1}{|E|s + \rk(M)} \sum_{\substack{F \in \calL(M) \\ F \neq E}} \overline{\chi}_{M/F}(1) Z^{\top}_{M|F}(s),
    \]
    where the sum is taken over all proper flats $F \in \calL(M)$.
\end{proposition}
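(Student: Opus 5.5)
The plan is to prove the recurrence directly from Definition~\ref{def:zeta-flags} by sorting the flags of $\calL(M)$ according to their largest proper flat. Assume $M$ is loopless; the trivial case is immediate. In the other case both sides vanish: $Z^{\top}_M(s)=0$ by definition, and every flat $F$ contains all loops, so $F\neq\varnothing$ and $M|F$ has loops, giving $Z^{\top}_{M|F}(s)=0$. Every flag $\calF=(\varnothing=F_0\subsetneq\dots\subsetneq F_k=E)$ with $k\ge 1$ has a unique penultimate flat $F:=F_{k-1}$, which is a proper flat. Conversely, the truncated flag $\calF'=(F_0\subsetneq\dots\subsetneq F_{k-1}=F)$ is an arbitrary flag in $\calL(M|F)$, because the flats of $M|F$ are exactly the flats of $M$ contained in $F$. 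So $\calF\mapsto(F,\calF')$ is a bijection between flags of $\calL(M)$ and pairs consisting of a proper flat $F$ and a flag $\calF'$ of $\calL(M|F)$. When $F=\varnothing$, the flag $\calF'$ is the trivial flag of the trivial matroid $M|\varnothing$.

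Under this bijection every factor in the defining sum splits. First, $M_{\calF}=(M|F)_{\calF'}\oplus (M|E/F)=(M|F)_{\calF'}\oplus M/F$. Since the characteristic polynomial is multiplicative over direct sums, $\chi_{M_\calF}(q)=\chi_{(M|F)_{\calF'}}(q)\,\chi_{M/F}(q)$. Second, $\len(\calF)=\len(\calF')+1$, so
\[
\frac{\chi_{M_{\calF}}(q)}{(q-1)^{\len(\calF)}}=\frac{\chi_{(M|F)_{\calF'}}(q)}{(q-1)^{\len(\calF')}}\cdot\overline{\chi}_{M/F}(q).
\]
Third, the product of $1/(|F_i|s+\rk(F_i))$ over the nonempty members of $\calF$ is the corresponding product for $\calF'$ times the extra factor $1/(|E|s+\rk(M))$ contributed by $F_k=E$.

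Before evaluating at $q=1$, I must check that each fraction is a genuine polynomial, so that evaluation commutes with the product. Each summand $M|F_i/F_{i-1}$ is loopless, because $F_{i-1}$ is a flat, and has positive rank, so its characteristic polynomial is divisible by $(q-1)$. By the same reasoning $M/F$ is loopless of positive rank, so $\overline{\chi}_{M/F}(q)$ is a polynomial. Summing first over $\calF'$ for fixed $F$ then yields exactly $Z^{\top}_{M|F}(s)$ times $\overline{\chi}_{M/F}(1)/(|E|s+\rk(M))$. Summing over proper $F$ gives the stated recurrence.

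The main point requiring care is the bookkeeping in the boundary case $F=\varnothing$, together with the divisibility and loopless checks above. When $F=\varnothing$, the flag is $(\varnothing\subsetneq E)$, and its term must equal $\overline{\chi}_M(1)/(|E|s+\rk M)$. This matches the right-hand side because $Z^{\top}_{M|\varnothing}=1$, and the trivial matroid's flag has length $0$ and characteristic polynomial $1$. Beyond these checks the argument is a direct regrouping of a finite sum.
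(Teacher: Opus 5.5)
Your proof is correct. The paper gives no proof of this proposition; it quotes it from \cite[Corollary 7.2]{jensen2021motivic}. Your argument is therefore a self-contained derivation directly from Definition~\ref{def:zeta-flags}, and its ingredients are the right ones:
\begin{itemize}
\item the bijection $\calF\leftrightarrow(F,\calF')$ given by the penultimate flat;
\item the splitting $M_{\calF}=(M|F)_{\calF'}\oplus M/F$;
\item the check that each $\chi_{M|F_i/F_{i-1}}(q)$ and $\chi_{M/F}(q)$ is divisible by $q-1$, so that evaluating at $q=1$ commutes with the factorization.
\end{itemize}
This is the same penultimate-flat decomposition the paper itself uses, in the other direction, to prove Proposition~\ref{prop:upsilon-flags}.

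One correction: the trivial matroid is not an ``immediate'' case; the statement has to exclude it. For $E=\varnothing$ there are no proper flats, so the right-hand side is $\frac{1}{0}$ times an empty sum, whereas $Z^{\top}_M(s)=1$. The recurrence should be read for $E\neq\varnothing$. That hypothesis is also what makes $|E|s+\rk(M)$ a nonzero rational function in your loop case, where you divide by it.
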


We also note that the topological zeta function is multiplicative over direct sums.

\begin{proposition}[\cite{vanderveer2019}, Theorem 5]
\label{prop:zeta-multiplicative}
    Let $M_1$ and $M_2$ be matroids. Then
    \[
        Z^{\top}_{M_1 \oplus M_2}(s) = Z^{\top}_{M_1}(s) Z^{\top}_{M_2}(s).
    \]
\end{proposition}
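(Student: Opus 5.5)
This is a direct consequence of the definition together with the multiplicativity of Proposition~\ref{prop:zeta-multiplicative}. The plan is to organize the flag sum in Definition~\ref{def:zeta-flags} according to the decomposition of $M$, so that the sum for $M_1 \oplus M_2$ factors as the product of the two individual sums.

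First dispose of the degenerate cases. If $M_1$ or $M_2$ is trivial, the direct sum is the other matroid and $Z^{\top}$ of the trivial matroid is $1$, so there is nothing to prove. If either summand has a loop, so does $M_1\oplus M_2$, and both sides vanish. Hence assume both are loopless on nonempty ground sets $E_1, E_2$.

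The key structural fact is that $\calL(M_1\oplus M_2)\cong \calL(M_1)\times\calL(M_2)$ via $F=F_1\sqcup F_2$, with $|F|=|F_1|+|F_2|$ and $\rk(F)=\rk(F_1)+\rk(F_2)$. Rather than matching flags directly, I would prove the identity by strong induction on $|E_1|+|E_2|$ using the recurrence of Proposition~\ref{prop:zeta-recurrence}, which already packages the flag sum.

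Write $N=M_1\oplus M_2$ and $D = |E|s+\rk(N)$, and set
\[
a_i=|E_i|s+\rk(M_i), \qquad a_i(F_i)=|F_i|s+\rk(F_i).
\]
The recurrence gives $a_i Z_i = \sum_{F_i\neq E_i}\overline{\chi}_{M_i/F_i}(1)Z_{M_i|F_i}$. It is convenient to include the top term by writing $\overline\chi_{M_i/E_i}(1)$ for the empty contraction; then, since $\overline\chi(1)=0$ whenever the contraction has positive rank, only terms with $F_i$ satisfying\dots

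Here I have to pause, because I should check that the recurrence is really additive. For the proper flats $F=F_1\sqcup F_2\neq E$ of $N$ we have $N/F = M_1/F_1\oplus M_2/F_2$ and $N|F = M_1|F_1\oplus M_2|F_2$. The induction hypothesis gives $Z_{N|F}=Z_{M_1|F_1}Z_{M_2|F_2}$. The reduced characteristic polynomial satisfies $\overline\chi_{A\oplus B}=(q-1)\overline\chi_A\overline\chi_B$ when both summands have positive rank. Evaluated at $q=1$, this kills every term with both $F_1\neq E_1$ and $F_2\neq E_2$. The surviving terms are those with $F_1=E_1$ or $F_2=E_2$, where the contraction is $M_2/F_2$ or $M_1/F_1$ respectively. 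Hence
\[
D\,Z_N = Z_1\, a_2 Z_2 + Z_2\, a_1 Z_1 = (a_1+a_2)Z_1Z_2 ,
\]
and since $D=a_1+a_2$, this yields $Z_N=Z_1Z_2$.

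The main obstacle is precisely the identity $\overline\chi_{A\oplus B}(1)=0$ when both summands have positive rank, together with the normalization conventions for $\overline\chi$ of the rank-zero contraction. From $\chi_{A\oplus B}=\chi_A\chi_B$ and $\overline\chi=\chi/(q-1)$ one gets $\overline\chi_{A\oplus B}=(q-1)\overline\chi_A\overline\chi_B$, which vanishes at $q=1$. I would check carefully that the recurrence's convention for the boundary terms ($F_1=E_1$ with $F_2$ proper, and vice versa) matches this, so that each boundary sum reproduces exactly the recurrence for the corresponding factor.
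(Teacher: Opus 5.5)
Your argument is correct. The paper gives no proof of its own: it imports the result from van der Veer, where the zeta function is defined for arbitrary atomic meet-semilattices through combinatorial blowups (building sets and nested sets), and the product formula comes out of that framework.

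You stay entirely inside the flag definition, using strong induction on $|E_1|+|E_2|$ via the recurrence of Proposition~\ref{prop:zeta-recurrence}. Your one key input is that $\overline{\chi}_{A\oplus B}(1)=0$ when $A$ and $B$ both have positive rank; up to sign, this is the vanishing of the beta invariant on disconnected matroids. That kills every proper flat $F_1\sqcup F_2$ with both $F_i$ proper. The two surviving families are $Z^{\top}_{M_1}$ times the recurrence sum for $M_2$, and vice versa. The denominators then add, $|E|s+\rk(N)=a_1+a_2$.

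The boundary convention you flagged is immediate: $M_1/E_1$ is the empty matroid with $\chi=1$, so $N/(E_1\sqcup F_2)=M_2/F_2$. There is also no circularity. The recurrence follows from Definition~\ref{def:zeta-flags} by peeling off the last step of each flag, using multiplicativity of $\chi$ rather than of $Z^{\top}$.

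Your route is elementary and self-contained. It also matches the recurrence-based arguments the paper uses later, in Theorems~\ref{thm:upsilon-recurrence} and~\ref{thm:truncation-zeta}. The cited route covers general atomic meet-semilattices and obtains the product formula structurally rather than by computation.

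Two cleanups are needed. First, delete your opening sentence, which invokes the very proposition you are proving. Second, delete the abandoned claim that $\overline{\chi}(1)=0$ whenever the contraction has positive rank. That claim is false; for example, $\overline{\chi}_{U_{1,1}}(1)=1$. Only the direct-sum version is true, and it is the only one you need.
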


Using this decomposition, one easily obtains an explicit closed formula for the topological zeta function of any uniform matroid. \footnote{This result was also independently obtained in an unpublished REU project led by Max Kutler on the topological zeta functions for uniform matroids.}
\begin{proposition}
\label{prop:zeta-uniform}
The topological zeta function of the uniform matroid $U_{r,n}$ is
\[
    Z^{\top}_{U_{r,n}}(s) = \frac{1}{ns + r} \sum_{k=0}^{r-1} {n \choose k} {r - n \choose r-1 -k} \frac{1}{(s+1)^k}.
\]
In particular, the topological zeta function of $U_{n,n}$ is
\[
    Z^{\top}_{U_{n,n}}(s) = \frac{1}{(s+1)^n}.
\]
In addition, the Taylor series expansion of $Z^{\top}_{U_{r,n}}(s)$ around $0$ is
\[
    Z^{\top}_{U_{r,n}}(s) = \sum_{k \geqslant 0} a_k s^k,
\]
where
\[
    a_k = \begin{cases}
        (-1)^k \multiset{n}{k} & \quad \text{if $k \leqslant r$} \\
        -\frac{1}{r} \sum\limits_{i=1}^r \Big[ n \binom{r-1}{i-1} + r \binom{r-1}{i} \Big] a_{k-i} & \quad \text{if $k > r$}.
    \end{cases}
\]
    
\end{proposition}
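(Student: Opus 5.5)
The plan is to apply the recurrence of Proposition~\ref{prop:zeta-recurrence} to $M = U_{r,n}$ on $E=[n]$. The proper flats of $U_{r,n}$ are exactly the subsets $F \subseteq E$ with $|F| = k < r$. For such $F$, the restriction $M|F$ is the Boolean matroid $U_{k,k}$, and the contraction $M/F$ is $U_{r-k,n-k}$.

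First I will handle the Boolean case. For $U_{1,1}$ the only flag is $\varnothing \subsetneq E$, so Definition~\ref{def:zeta-flags} gives $Z^\top_{U_{1,1}}(s) = 1/(s+1)$. Since $U_{k,k} = U_{1,1}^{\oplus k}$, Proposition~\ref{prop:zeta-multiplicative} gives $Z^\top_{U_{k,k}}(s) = (s+1)^{-k}$, which is also the claimed formula for $U_{n,n}$.

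Next I need the reduced characteristic polynomial of a uniform matroid at $q=1$. Every subset of size less than $m$ is independent and closed in $U_{m,N}$, so $\chi_{U_{m,N}}(q)=\sum_{j<m}(-1)^j\binom{N}{j}(q^{m-j}-1)$. Dividing by $q-1$ and evaluating at $q=1$ yields the standard value $\overline{\chi}_{U_{m,N}}(1) = (-1)^{m-1}\binom{N-2}{m-1}$. Taking $m=r-k$ and $N=n-k$ and using upper negation $\binom{-a}{j} = (-1)^j\binom{a+j-1}{j}$ with $a=n-r$, this becomes $\binom{r-n}{r-1-k}$. There are $\binom{n}{k}$ flats of size $k$, so substituting into Proposition~\ref{prop:zeta-recurrence} gives the closed formula.

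For the Taylor coefficients with $k\le r$, the claim is $Z^\top_{U_{r,n}}(s) \equiv (1+s)^{-n} \pmod{s^{r+1}}$. Multiplying by $(ns+r)(1+s)^n$, which is a unit in $\Q[[s]]$, this is equivalent to
\[
\sum_{k=0}^{r-1}\binom{n}{k}\binom{r-n}{r-1-k}(1+s)^{n-k} \equiv ns+r \pmod{s^{r+1}}.
\]
The coefficient of $s^j$ on the left is $\sum_k \binom{n}{k}\binom{n-k}{j}\binom{r-n}{r-1-k}$. Rewriting $\binom{n}{k}\binom{n-k}{j}=\binom{n}{j}\binom{n-j}{k}$ and applying Vandermonde, this equals $\binom{n}{j}\binom{r-j}{r-1}$. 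That value is $r$ for $j=0$ and $n$ for $j=1$. For $2\le j\le r$ it is $0$, because then $0\le r-j<r-1$.

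For $k>r$, I multiply the closed formula by $(ns+r)(s+1)^{r-1}$; the right-hand side becomes a polynomial of degree at most $r-1$ in $s$. Writing
\[
(ns+r)(s+1)^{r-1}=\sum_{i\ge0}\Big[n\binom{r-1}{i-1}+r\binom{r-1}{i}\Big]s^i,
\]
whose constant term is $r$, the coefficient of $s^k$ in the product vanishes for every $k\ge r$. Solving that vanishing equation for $a_k$ gives the stated linear recurrence.

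The main obstacle is the binomial bookkeeping: computing $\overline{\chi}_{U_{m,N}}(1)$ with the correct sign, and spotting that the Taylor identity for $k \le r$ collapses via Vandermonde once it is multiplied through by $(1+s)^n$. Everything else is direct substitution.
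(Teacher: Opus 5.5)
Your proof is correct and follows the route the paper indicates: you apply the recurrence of Proposition~\ref{prop:zeta-recurrence} over the proper flats (the $k$-subsets with $k<r$), use multiplicativity for the Boolean restrictions $U_{k,k}=U_{1,1}^{\oplus k}$, and evaluate $\overline{\chi}_{U_{r-k,n-k}}(1)=\binom{r-n}{r-1-k}$. The paper does not prove the Taylor-coefficient claims, and your Vandermonde computation for $k\le r$ and your denominator-clearing argument for $k>r$ fill this in correctly; both tacitly require $r\geq 1$, as the statement itself does.
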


\section{The M\"{o}bius inversion of the topological zeta function}
\label{sec:mobius}
This section is devoted to studying the M\"{o}bius inversion of the topological zeta function, which will prove to be a valuable tool in our computations. We will also see that it shares some properties of the topological zeta function.
\begin{definition}
\label{def:upsilon}
    Let $M$ be a matroid. The \text{M\"{o}bius inversion} $Y_M(s)$ of the topological zeta function $Z^{\top}_M(s)$ is
    \[
        Y_M(s) := \sum_{F \in \calL(M)} \mu(F,E) Z^{\top}_{M|F}(s),
    \]
    where $\mu$ is the M\"{o}bius function on the lattice of flats $\calL(M)$.
\end{definition}

Many of the result for topological zeta functions have an analogous statement for its M\"{o}bius inversion. For instance, we have a recurrence relation in terms of the restriction to proper flats, as in Proposition \ref{prop:zeta-recurrence}.

\begin{thm}
\label{thm:upsilon-recurrence}
    Let $M$ be a matroid over ground set $E$. Then
    \[
        Y_M(s) = \frac{-1}{|E|s + \rk(M)} \sum_{\substack{F \in \calL(M) \\ F \neq E}} (|E|s + \rk(F)) Y_{M|F}(s),
    \]
    where the sum is taken over all proper flats $F \in \calL(M)$.
\end{thm}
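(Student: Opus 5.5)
The plan is to clear the denominator and prove the equivalent identity
\[
    \sum_{F \in \calL(M)} (|E|s + \rk(F))\, Y_{M|F}(s) = 0,
\]
where the term $F = E$ contributes $(|E|s+\rk(M))Y_M(s)$. Note that the coefficient involves $|E|$, not $|F|$; this is what makes the argument work. I will reduce this identity to the recurrence of Proposition~\ref{prop:zeta-recurrence} by expanding each $Y_{M|F}$ using Definition~\ref{def:upsilon} and swapping the order of summation. Here $M$ is assumed nonempty, since otherwise the prefactor is $1/0$.

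First I dispose of the case where $M$ has loops. Then every flat, including the closure of $\varnothing$, contains a loop. So every restriction $M|G$ has loops, every $Z^{\top}_{M|G}$ vanishes, and every $Y_{M|F}$ vanishes, which makes both sides $0$. Hence assume $M$ is loopless. The lattice of flats of $M|F$ is the interval $[\varnothing, F]$ in $\calL(M)$, so $Y_{M|F} = \sum_{G \leqslant F} \mu(G,F) Z^{\top}_{M|G}$. Substituting and swapping sums gives
\[
    \sum_{G \in \calL(M)} Z^{\top}_{M|G}(s) \sum_{G \leqslant F \leqslant E} (|E|s + \rk F)\, \mu(G,F).
\]
By the defining property of $\mu$, the $|E|s$ part of the inner sum is $|E|s\,\delta_{G,E}$. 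So the whole expression equals $|E|s\, Z^{\top}_M(s) + \sum_G c_G Z^{\top}_{M|G}(s)$, where $c_G := \sum_{G\leqslant F\leqslant E} \mu(G,F)\rk(F)$. In particular $c_E = \rk(M)$.

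The key step is to identify $c_G$ for $G \neq E$. The interval $[G,E]$ is the lattice of flats of $M/G$, and $\rk(F) = \rk(G) + \rk_{M/G}(F\setminus G)$. Because $\sum_{[G,E]}\mu = 0$ when $G \ne E$, the $\rk(G)$ part drops out, leaving $c_G = \sum_{F' \in \calL(M/G)} \mu(\varnothing,F')\rk(F')$. On the other hand, set $r' = \rk(M/G)$ and write $\chi_{M/G}(q) = \sum_{F'} \mu(\varnothing,F') q^{r'-\rk F'}$. Since $M/G$ is loopless and nonempty, $\chi_{M/G}(1)=0$, so $\overline{\chi}_{M/G}(1) = \chi_{M/G}'(1)$. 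This derivative equals $\sum_{F'}\mu(\varnothing,F')(r' - \rk F') = -c_G$.

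Therefore the expression becomes $(|E|s+\rk M)Z^{\top}_M(s) - \sum_{G\neq E}\overline{\chi}_{M/G}(1) Z^{\top}_{M|G}(s)$, which is $0$ by Proposition~\ref{prop:zeta-recurrence}. This proves the claim. The main point needing care is the computation of $\overline{\chi}_{M/G}(1)$ as a derivative and the use of the convention $\overline\chi = \chi/(q-1)$. The degenerate cases ($G$ with $M/G$ trivial, i.e.\ $G=E$, which is excluded) are routine.
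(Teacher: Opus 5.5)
Your proof is correct. It is essentially the paper's argument with the M\"{o}bius inversion run in the opposite direction, so the key lattice identity is replaced by its M\"{o}bius-inverse form.

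The paper starts from $Z^{\top}_M=\sum_{F}Y_{M|F}$ and substitutes Proposition~\ref{prop:zeta-recurrence} for $Z^{\top}_M$. It then re-expands each $Z^{\top}_{M|F}$ as $\sum_{G\subseteq F}Y_{M|G}$ and swaps sums. At that point it needs $\sum_{G\subseteq F\subsetneq E}\overline{\chi}_{M/F}(1)=\rk(M)-\rk(G)$, which it imports from \cite[Proposition 2.2]{jensen2021motivic}.

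You instead expand $Y$ in terms of $Z^{\top}$ via $\mu$. After the swap you need $\sum_{F\in[G,E]}\mu(G,F)\rk(F)=-\overline{\chi}_{M/G}(1)$ for $G\neq E$. You prove this directly by differentiating Rota's formula for $\chi_{M/G}$ at $q=1$. Inverting the paper's identity over the interval $[G,E]$ yields exactly your $c_G=-\overline{\chi}_{M/G}(1)$, so the two identities carry the same information.

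What your route buys is self-containedness. You need no external lemma beyond the zeta recurrence itself, and you dispose of the case with loops explicitly.

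The paper's route is slightly shorter given the citation. It also produces the recurrence directly as a formula for $Y_M$. You instead prove the equivalent cleared-denominator identity $\sum_{F}(|E|s+\rk F)\,Y_{M|F}(s)=0$ and divide at the end.
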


\begin{proof}
    From the M\"{o}bius inversion formula, Definition \ref{def:upsilon} becomes
    \[
        Y_M(s) = Z^{\top}_M(s) - \sum_{\substack{F \in \calL(M) \\ F \neq E}} Y_{M|F}(s).
    \]
    Now applying Proposition \ref{prop:zeta-recurrence} to evaluate $Z^{\top}_M(s)$, and writing each topological zeta function in terms of the M\"{o}bius inversion formula, we get
    \[
        Y_M(s) = \frac{1}{|E|s + \rk(M)} \left(\sum_{\substack{F \in \calL(M) \\ F \neq E}} \overline{\chi}_{M/F}(1) \sum_{\substack{G \in \calL(M) \\ G \subseteq F}} Y_{M|G}(s) \right) - \sum_{\substack{F \in \calL(M) \\ F \neq E}} Y_{M|F}(s).
    \]
    Working within the parentheses, exchanging the order in the double sum and applying \cite[Proposition 2.2]{jensen2021motivic} we have
    \begin{align*}
        \sum_{\substack{F \in \calL(M) \\ F \neq E}} \overline{\chi}_{M/F}(1) \sum_{\substack{G \in \calL(M)}} Y_{M|G}(s) &= \sum_{\substack{G \in \calL(M) \\ G \neq E}} Y_{M | G}(s) \sum_{\substack{F \in \calL(M) \\ G \subseteq F \subsetneq E}} \overline{\chi}_{M/F}(1) \\
        &= \sum_{\substack{G \in \calL(M) \\ G \neq E}} [\rk(M) - \rk(G)] \cdot Y_{M|G}(s).
    \end{align*}
    Replacing the indexing variable $G$ with $F$, and returning to the original expression, we conclude that
    \begin{align*}
        Y_M(s) &= \frac{1}{|E|s + \rk(M)} \sum_{\substack{F \in \calL(M) \\ F \neq E}} (\rk(M) - \rk(F)) Y_{M|F}(s) - \sum_{\substack{F \in \calL(M) \\ F \neq E}} Y_{M|F}(s) \\
        &= \frac{-1}{|E|s + \rk(M)} \sum_{\substack{F \in \calL(M) \\ F \neq E}} (|E|s + \rk(F)) Y_{M|F}(s).\qedhere
    \end{align*}
\end{proof}

Using this, we give an explicit closed formula for the M\"{o}bius inversion for uniform matroids, as in Proposition \ref{prop:zeta-uniform}.
\begin{corollary}
\label{cor:upsilon-uniform}
Let $U_{r,n}$ be a uniform matroid. Then
\[
    Y_{U_{r,n}}(s) = \frac{1}{ns + r} \left( -rs \left(\frac{-s}{s+1} \right)^{r-1} {n \choose r}\right).
\]
\end{corollary}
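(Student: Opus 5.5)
Since $U_{r,n}$ is uniform, every restriction to a proper flat is again uniform: the proper flats of $U_{r,n}$ are exactly the subsets $F \subseteq E$ with $|F| = k < r$, and $U_{r,n}|F \cong U_{k,k}$ is the free matroid. I therefore plan to first compute $Y_{U_{k,k}}(s)$ for all $k$, and then apply Theorem~\ref{thm:upsilon-recurrence} once at the top level.

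\textbf{Step 1: the free (Boolean) case.} For $U_{k,k}$ the lattice of flats is the Boolean lattice on $k$ elements, so $\mu(F,E) = (-1)^{k-|F|}$. By Proposition~\ref{prop:zeta-uniform}, $Z^{\top}_{U_{j,j}}(s) = (s+1)^{-j}$. Hence
\[
Y_{U_{k,k}}(s) = \sum_{j=0}^{k} \binom{k}{j} (-1)^{k-j} (s+1)^{-j} = \left(\frac{1}{s+1} - 1\right)^{k} = \left(\frac{-s}{s+1}\right)^{k}.
\]
This is the source of the factor $\left(\frac{-s}{s+1}\right)^{r-1}$ in the claimed formula. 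Alternatively, one can get it from multiplicativity (Proposition~\ref{prop:zeta-multiplicative}), since $Y$ is multiplicative for Boolean lattices.

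\textbf{Step 2: apply the recurrence.} By Theorem~\ref{thm:upsilon-recurrence} with $|E| = n$, $\rk = r$, and proper flats the $k$-subsets for $k<r$,
\[
Y_{U_{r,n}}(s) = \frac{-1}{ns+r} \sum_{k=0}^{r-1} \binom{n}{k} (ns+k) \left(\frac{-s}{s+1}\right)^{k}.
\]

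\textbf{Step 3: evaluate the sum (the main obstacle).} I must show that
\[
\sum_{k=0}^{r-1} \binom{n}{k} (ns+k)\, x^{k} = rs\binom{n}{r} x^{r-1}, \qquad x = \frac{-s}{s+1}.
\]
The key observation is that $s = -x/(1+x)$, so $ns + k = \frac{k(1+x) - nx}{1+x}$. Write
\[
\binom{n}{k}\bigl(k(1+x) - nx\bigr) x^{k} = k\binom{n}{k} x^{k} - (n-k)\binom{n}{k} x^{k+1}.
\]
Now use $(n-k)\binom{n}{k} = (k+1)\binom{n}{k+1}$. The terms then telescope as $A_k - A_{k+1}$ with $A_k = k\binom{n}{k}x^k$. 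Summing from $k=0$ to $r-1$ gives $A_0 - A_r = -r\binom{n}{r}x^r$.

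Dividing by $1+x$ gives $-r\binom{n}{r}\,\frac{x^{r}}{1+x}$. Since $\frac{x}{1+x} = -s$, this equals $rs\binom{n}{r}x^{r-1}$, as required.

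Substituting back into Step 2 gives
\[
Y_{U_{r,n}}(s) = \frac{1}{ns+r}\left(-rs\left(\frac{-s}{s+1}\right)^{r-1}\binom{n}{r}\right),
\]
which is the claimed formula. The only delicate point is choosing the substitution $s = -x/(1+x)$ so that the sum telescopes; a sanity check at $r = 1$ gives $Y = -ns/(ns+1)$, consistent with $Z^{\top}_{U_{1,n}} - 1$.
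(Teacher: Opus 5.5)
Your proof is correct and follows the paper's route: apply Theorem~\ref{thm:upsilon-recurrence} once, note that the proper flats are the $k$-subsets with $k<r$ whose restrictions are Boolean, and get their $Y_{U_{k,k}}(s)=\left(\frac{-s}{s+1}\right)^{k}$ from $Z^{\top}_{U_{k,k}}(s)=(s+1)^{-k}$ in Proposition~\ref{prop:zeta-uniform}. Your telescoping evaluation via $s=-x/(1+x)$ supplies the algebra the paper leaves implicit; in the Step 1 aside, the multiplicativity you want is Proposition~\ref{prop:upsilon-multiplicative}, not Proposition~\ref{prop:zeta-multiplicative}.
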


\begin{proof}
    Combining Theorem ~\ref{thm:upsilon-recurrence} and Proposition~\ref{prop:zeta-uniform} gives the result. 
\end{proof}

We also show that the M\"{o}bius inversion is multiplicative over direct sums, as in Proposition \ref{prop:zeta-multiplicative}.

\begin{proposition}
\label{prop:upsilon-multiplicative}
    Let $M_1$ and $M_2$ be matroids. Then
    \[
        Y_{M_1 \oplus M_2}(s) = Y_{M_1}(s) Y_{M_2}(s).
    \]
\end{proposition}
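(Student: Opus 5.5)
The argument is a direct computation from Definition~\ref{def:upsilon}, combined with Proposition~\ref{prop:zeta-multiplicative} and the standard product structure of the lattice of flats of a direct sum. Let $M = M_1 \oplus M_2$ on $E = E_1 \sqcup E_2$. We first dispose of degenerate cases. If either $M_i$ has a loop, then so does $M$. Moreover, every restriction of $M$ (respectively $M_i$) to a flat contains the loops, since the loops form the closure of $\varnothing$, which lies in every flat. Hence every term in the defining sums vanishes and both sides are $0$. Trivial matroids are handled by $Z^\top = 1$ and $Y = 1$, and are consistent with the general argument. So assume both $M_i$ are loopless.

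\textbf{Step 1.} Recall that $\calL(M_1 \oplus M_2) \cong \calL(M_1) \times \calL(M_2)$ via $F \mapsto (F \cap E_1, F \cap E_2)$. Equivalently, every flat is uniquely $F_1 \sqcup F_2$ with $F_i \in \calL(M_i)$. This isomorphism of posets gives the product formula for the M\"obius function:
\[
    \mu_{\calL(M)}(F_1 \sqcup F_2, E) = \mu_{\calL(M_1)}(F_1,E_1)\,\mu_{\calL(M_2)}(F_2,E_2).
\]

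\textbf{Step 2.} For such a flat, the restriction decomposes as $M|(F_1\sqcup F_2) = M_1|F_1 \oplus M_2|F_2$. Proposition~\ref{prop:zeta-multiplicative} then gives
\[
    Z^\top_{M|F}(s) = Z^\top_{M_1|F_1}(s)\,Z^\top_{M_2|F_2}(s).
\]

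\textbf{Step 3.} Substitute Steps 1 and 2 into Definition~\ref{def:upsilon}. The sum over $\calL(M)$ becomes a double sum over $\calL(M_1)\times\calL(M_2)$, and each summand is a product of a term depending only on $F_1$ and a term depending only on $F_2$. The double sum therefore factors as
\[
    \Big(\sum_{F_1} \mu(F_1,E_1) Z^\top_{M_1|F_1}(s)\Big)\Big(\sum_{F_2} \mu(F_2,E_2) Z^\top_{M_2|F_2}(s)\Big) = Y_{M_1}(s)\,Y_{M_2}(s).
\]

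The only step needing care is Step 1, namely justifying that the lattice of flats of a direct sum is the product lattice. This follows because closure in $M_1\oplus M_2$ acts componentwise. The M\"obius function of a product of posets is then the product of the M\"obius functions, which is a standard fact (e.g.\ by checking that the product satisfies the defining recursion). Everything else is a formal rearrangement of finite sums.
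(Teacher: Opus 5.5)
Your proof is correct and follows the same route as the paper: you identify $\calL(M_1\oplus M_2)$ with $\calL(M_1)\times\calL(M_2)$, use that the M\"obius function is multiplicative on product posets together with Proposition~\ref{prop:zeta-multiplicative}, and factor the double sum. Your Step 2, $(M_1\oplus M_2)|(F_1\sqcup F_2) = M_1|F_1\oplus M_2|F_2$, is stated more carefully than in the paper's write-up, which writes $Z^{\top}_{M_1\oplus M_2}(s)$ where the restricted zeta function $Z^{\top}_{(M_1\oplus M_2)|F}(s)$ is meant.
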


\begin{proof}
    As $\calL(M_1 \oplus M_2) = \calL(M_1) \times \calL(M_2)$, any flat $F \in \calL(M_1 \oplus M_2)$ is the disjoint union $F = F_1 \sqcup F_2$ for $F_1 \in \calL(M_1)$ and $F_2 \in \calL(M_2)$. Thus
    \[
        Y_{M_1 \oplus M_2}(s) = \sum_{\substack{F_1 \in \calL(M_1) \\ F_2 \in \calL(M_2)}} \mu_{M_1 \oplus M_2}(F_1 \sqcup F_2, E_1 \sqcup E_2) Z^{\top}_{M_1 \oplus M_2}(s).
    \]
    Now we see that the M\"{o}bius function is multiplicative in that
    \[
        \mu_{M_1 \oplus M_2}(F_1 \sqcup F_2, E_1 \sqcup E_2) = \mu_{M_1}(F_1, E_1) \mu_{M_2}(F_2, E_2).
    \]
    And as the topological zeta function of the direct sum factors from Proposition \ref{prop:zeta-multiplicative}, we have
    \begin{align*}
        Y_{M_1 \oplus M_2}(s) &= \sum_{\substack{F_1 \in \calL(M_1) \\ F_2 \in \calL(M_2)}} \mu_{M_1}(F_1, E_1) Z^{\top}_{M_1}(s) \cdot \mu_{M_2}(F_2, E_2) Z^{\top}_{M_2}(s) \\
        &= Y_{M_1}(s) Y_{M_2}(s).\qedhere
    \end{align*}
\end{proof}

Finally, we can give a formula for the M\"{o}bius inversion in terms of flags of flats which does not reference the topological zeta function, as in Definition \ref{def:zeta-flags}.

\begin{proposition}
\label{prop:upsilon-flags}
    Let $M$ be a loopless matroid over ground set $E$. The M\"{o}bius inversion of the topological zeta function is given by
    \[
        Y_M(s) = \sum_{\calF} \prod_{i=1}^{\len(\calF)} - \frac{|F_i|s + \rk(F_{i-1})}{|F_i|s + \rk(F_i)},
    \]
    where the sum is taken over all flags $\calF$ in the lattice of flats $\calL(M)$.
\end{proposition}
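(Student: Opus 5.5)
The plan is to show that the flag sum on the right-hand side satisfies the same recurrence as $Y_M(s)$ in Theorem~\ref{thm:upsilon-recurrence}, with the same initial value. Equality then follows by strong induction on $|E|$ (equivalently on $\rk(M)$, since $M$ is loopless). For a loopless matroid $M$ on $E$, write
\[
    W_M(s) := \sum_{\calF} \prod_{i=1}^{\len(\calF)} - \frac{|F_i|s + \rk(F_{i-1})}{|F_i|s + \rk(F_i)},
\]
where $\calF$ ranges over flags $\varnothing = F_0 \subsetneq F_1 \subsetneq \dotsb \subsetneq F_k = E$ in $\calL(M)$. The goal is $W_M = Y_M$.

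\emph{Base case.} If $E = \varnothing$, the only flag is $\varnothing = F_0 = E$, of length $0$. The product is empty, so $W_M = 1$. On the other side, $\calL(M) = \{\varnothing\}$ and $\mu(\varnothing,\varnothing) = 1$, so $Y_M = Z^{\top}_M = 1$. Since $M$ is loopless, $E \neq \varnothing$ forces $\rk(M) > 0$, so $\varnothing \neq E$ and every flag has length at least $1$.

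\emph{Recurrence for $W_M$.} For nonempty $E$, I split the flag sum according to the penultimate flat $F := F_{k-1}$, which ranges over all proper flats of $M$, including $F = \varnothing$. The last factor is $-\frac{|E|s + \rk(F)}{|E|s + \rk(M)}$. The truncated flag $\varnothing = F_0 \subsetneq \dotsb \subsetneq F_{k-1} = F$ is exactly a full flag in $\calL(M|F)$, because $\calL(M|F)$ is the interval $[\varnothing, F]$ of $\calL(M)$. Moreover, cardinalities and ranks of these flats are the same whether computed in $M$ or in $M|F$. Hence the remaining product summed over such truncated flags is precisely $W_{M|F}(s)$. When $F = \varnothing$, $M|F$ is the trivial matroid and the contribution is $W_{M|\varnothing} = 1$, consistent with the base case. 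This gives
\[
    W_M(s) = \frac{-1}{|E|s + \rk(M)} \sum_{\substack{F \in \calL(M) \\ F \neq E}} (|E|s + \rk(F))\, W_{M|F}(s).
\]
Each $M|F$ is loopless with $|F| < |E|$, so the inductive hypothesis gives $W_{M|F} = Y_{M|F}$. Comparing with Theorem~\ref{thm:upsilon-recurrence} then yields $W_M = Y_M$.

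\emph{Main obstacle.} I expect the only delicate point to be the bookkeeping at the ends of the flag. One must check that the bijection between flags of $M$ with penultimate flat $F$ and full flags of $M|F$ is correct, including the degenerate case $F = \varnothing$ with flags of length $1$. One must also check that the trivial-matroid conventions ($Y = Z^{\top} = 1$ and the empty product equal to $1$) match the term $F = \varnothing$ in Theorem~\ref{thm:upsilon-recurrence}. Once this is confirmed, the argument is a direct induction.
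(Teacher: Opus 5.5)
Your proposal is correct and follows the same argument as the paper. Both split flags by the penultimate flat $F_{k-1}$, identify the truncated flag with a flag of $\calL(M|F_{k-1})$, and match the result against the recurrence of Theorem~\ref{thm:upsilon-recurrence} by induction. You induct on $|E|$ rather than $\rk(M)$, which makes no difference for loopless $M$, and your explicit treatment of the $F=\varnothing$ term and the trivial-matroid base case is somewhat cleaner than the paper's ``will contain the product'' phrasing.
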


\begin{proof}
    We proceed by induction on $\rk(M)$. When $\rk(M) = 0$, $M = U_{0,0}$. Here $Y_{U_{0,0}}(s) = 1$ as $Z^{\top}_{U_{0,0}}(s) = 1$: the only flag in $\calL(U_{0,0})$ has length $0$. Now let $r \geqslant 1$, assume towards induction the statement is true for all matroids of rank $< r$, and let $M$ be a matroid over ground set $E$ of rank $\rk(M) = r$. Consider a flag
    \[
        \calF = (\varnothing = F_0 \subsetneq F_1 \subsetneq \dotsc \subsetneq F_{k-1} \subsetneq F_k = E).
    \]
    By hypothesis, there must be some penultimate flat $F_{k-1}$ in every flag. In particular, $F_{k-1} \in \calL(M)$ with $F_{k-1} \neq E$, and
    \[
        \calF_{\leqslant k-1} = (\varnothing = F_0 \subsetneq F_1 \subsetneq \dotsc \subsetneq F_{k-1}),
    \]
    is a flag in $M|F_{k-1}$. Therefore, applying the inductive hypothesis to $Y_{M|F_{k-1}}$, we see that $Y_{M|F_{k-1}}$ will be a sum of terms, one of which will be the product
    \[
        \prod_{i=1}^{\len(\calF_{\leqslant k-1})} - \frac{|F_i|s + \rk(F_{i-1})}{|F_i|s + \rk(F_i)}.
    \]
    Now by applying the recurrence relation in Theorem \ref{thm:upsilon-recurrence}, the term
    \[
        - \frac{|E|s + \rk(F_{k-1})}{|E|s + \rk(E)} Y_{M|F_{k-1}},
    \]
    will contain the product
    \[
        \prod_{i=1}^{\len(\calF)} - \frac{|F_i|s + \rk(F_{i-1})}{|F_i|s + \rk(F_i)}.
    \]
    Thus summing over all proper flats in the recurrence relation will count every flag in $\calL(M)$, so we obtain the desired result.
\end{proof}

\section{Topological zeta functions of matroids under matroid operations}
\label{sec:operations}
In this section, we examine how topological zeta functions behave under two matroid operations: truncation and free extension. In particular, in Theorem \ref{thm:truncation-zeta} and Theorem \ref{thm:free-extension-zeta} we derive formulas for the topological zeta functions of the respective operations in terms of the original topological zeta function and its M\"{o}bius inversion.

\subsection{Truncation} 
We begin by recalling the truncation of a matroid $M$.
\begin{defn}
    \label{defn:truncation}
    The \textbf{truncation} of $M$, denoted $\tr(M)$, is the rank $r-1$ matroid on ground set $E$ defined by the lattice of flats
    \[
        \calL(\tr(M)) = \calL(M) \setminus \calL(M)_{r-1}.
    \]
    Cryptomorphically, the truncation is also defined by the rank function
    \[
        \rk_{\tr(M)}(S) = \min\{\rk_M(S), r-1\}.
    \]
\end{defn}

The next two results are probably well-known to experts. The first relates the characteristic polynomials of $M$ and $\tr(M)$.

\begin{lemma}
    \label{lem:truncation-char-poly}
    Let $M$ be a matroid. The characteristic polynomial of the truncation is
    \[
        \chi_{\tr(M)}(q) = \frac{\chi_M(q) + (q-1)\chi_M(0)}{q}.
    \]
    Moreover
    \[
        \overline{\chi}_{\tr(M)}(q) = \frac{\chi_M(q) + \chi_M(0)}{q}.
    \]
\end{lemma}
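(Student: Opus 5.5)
Since $\calL(\tr(M))$ is obtained from $\calL(M)$ by deleting the coatoms, I will compute $\chi_{\tr(M)}$ directly from the Whitney/M\"obius expansion
\[
    \chi_M(q) = \sum_{F \in \calL(M)} \mu(\varnothing, F)\, q^{r - \rk(F)}.
\]
For every flat $F$ with $\rk(F) \leqslant r-2$, the interval $[\varnothing, F]$ is the same in $\calL(M)$ and in $\calL(\tr(M))$, so $\mu(\varnothing,F)$ is unchanged. In $\tr(M)$ this flat contributes $\mu(\varnothing,F)\,q^{r-1-\rk(F)}$, which is exactly $1/q$ times its contribution to $\chi_M(q)$. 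The rank $r-1$ flats do not appear in $\tr(M)$. The top element $E$ now has rank $r-1$, and its M\"obius value is forced by $\sum_{F \in \calL(\tr(M))} \mu_{\tr}(\varnothing,F) = 0$. I will assume $M$ is loopless of rank $r \geqslant 1$; otherwise both sides vanish or the statement degenerates.

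Write $A := \sum_{\rk(F) \leqslant r-2} \mu(\varnothing,F)\, q^{r-\rk(F)}$ and $B := \sum_{\rk(F) = r-1} \mu(\varnothing,F)$, so that $\chi_M(q) = A + Bq + \mu(\varnothing,E)$. Evaluating $\chi_M$ at $q=0$ kills $A$ and $Bq$, so $\chi_M(0) = \mu(\varnothing,E)$. Evaluating at $q=1$ gives $\chi_M(1) = 0$, so $A(1) + B + \mu(\varnothing,E) = 0$. In $\tr(M)$ we have
\[
    \mu_{\tr}(\varnothing,E) = -\sum_{\rk(F) \leqslant r-2} \mu(\varnothing,F) = -A(1) = B + \chi_M(0),
\]
and therefore
\[
    \chi_{\tr(M)}(q) = \frac{A}{q} + B + \chi_M(0) = \frac{\chi_M(q) - Bq - \chi_M(0)}{q} + B + \chi_M(0) = \frac{\chi_M(q) + (q-1)\chi_M(0)}{q}.
\]
This proves the first identity.

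For the reduced polynomial $\overline{\chi}_{\tr(M)}(q) = \chi_{\tr(M)}(q)/(q-1)$, I divide the first formula by $q-1$. This gives
\[
    \overline{\chi}_{\tr(M)}(q) = \frac{\overline{\chi}_M(q) + \chi_M(0)}{q}.
\]
Note that this has $\overline{\chi}_M$ in the numerator, not $\chi_M$ as literally printed; that printed form is presumably a typo. As a check, $U_{2,2}$ has $\tr = U_{1,2}$ with $\overline{\chi} = 1$, while $\overline{\chi}_{U_{2,2}}(q) = q-1$ and $\chi_{U_{2,2}}(0) = 1$, giving $\bigl((q-1)+1\bigr)/q = 1$. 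I will prove and state the corrected version.

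The only delicate point is the M\"obius value $\mu_{\tr}(\varnothing,E)$ in the modified lattice. Getting it requires the two relations coming from $\chi_M(1) = 0$ and the constant term $\chi_M(0) = \mu(\varnothing,E)$; everything else is bookkeeping.
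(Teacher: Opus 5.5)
Your proof is correct, but it takes a different route from the paper. The paper uses Whitney's subset expansion $\chi_N(q)=\sum_{S\subseteq E}(-1)^{|S|}q^{\rk(N)-\rk_N(S)}$ together with $\rk_{\tr(M)}(S)=\min\{\rk_M(S),r-1\}$. Every non-spanning $S$ contributes exactly $1/q$ times its term in $\chi_M(q)$, and every spanning $S$ contributes $(-1)^{|S|}$ to both polynomials. The alternating count of spanning sets is then recognized as $\chi_M(0)$, so no M\"obius values have to be recomputed.

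You instead work in $\calL(M)$, which matches the paper's lattice-theoretic definition of $\tr(M)$. You isolate the one genuine change, the M\"obius value at the new top, and pin it down using $\chi_M(1)=0$. Your observation $\chi_M(0)=\mu(\varnothing,E)$ is also exactly the identity $\chi_{M/F}(0)=\mu_M(F,E)$ that the paper later invokes in the proof of Theorem~\ref{thm:truncation-zeta}.

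The cost of your route is that the flat-wise expansion of $\chi_{\tr(M)}$ needs $\tr(M)$ to be loopless with bottom flat $\varnothing$. That requires $r\geqslant 2$, not $r\geqslant 1$ as you wrote. For $r=1$ the flat $\varnothing$ is itself deleted, every element of $\tr(M)$ is a loop, and $\mu_{\tr}(\varnothing,E)$ is meaningless. That case therefore needs its own one-line check: both sides vanish, since $\chi_M(q)=q-1$ and $\chi_M(0)=-1$. The subset expansion handles loops and this case with no special treatment.

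Your correction of the second identity is right. The paper's proof stops after the first formula and never addresses the ``moreover'' claim, whose printed form already fails for $U_{2,2}$. The version you derive, $\overline{\chi}_{\tr(M)}(q)=\bigl(\overline{\chi}_M(q)+\chi_M(0)\bigr)/q$, is the one the paper actually uses, at $q=1$, in the proof of Theorem~\ref{thm:truncation-zeta}.
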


\begin{proof}
    Let $M$ be a matroid over ground set $E$ of rank $r$. By definition
    \[
        \chi_{\tr(M)}(q) = \sum_{S \subseteq E} (-1)^{|S|} q^{\rk_{\tr(M)}(\tr(M))-\rk_{\tr(M)}(S)}.
    \]
    We separate the sum in two, based on the rank of the subsets $S \subseteq E$: first the subsets $S$ with $\rk_M(S) \leqslant r-1$, in which case $\rk_{\tr(M)}(S) = \rk_M(S)$, and second the subsets $S$ with $\rk(M) = r$, in which case $\rk_{\tr(M)}(S) = r-1$.
    \begin{align*}
        \chi_{\tr(M)}(q) &= \sum_{\rk_M(S) \leqslant r-1} (-1)^{|S|} q^{\rk_M(M) - \rk_M(S) - 1} + \sum_{\rk_M(S) = r} (-1)^{|S|} \\
        &= \frac{1}{q} \left(\sum_{\rk_M(S) \leqslant r-1} (-1)^{|S|} q^{\rk_M(M) - \rk_M(S)} + q\sum_{\rk_M(S) = r} (-1)^{|S|}\right).
    \end{align*}
    After clearing the $\frac{1}{q}$ term from the $\rk_M(S) \leqslant r-1$ subsets, we can write $q = 1 + (q-1)$ to recover $\chi_M(q)$, with the remaining sum exactly $\chi_M(0)$. Hence
    \begin{align*}
        \chi_{\tr(M)}(q) &= \frac{1}{q} \left(\sum_{S \subseteq E} (-1)^{|S|} q^{\rk_M(M) - \rk_M(S)} + (q-1) \sum_{\rk_M(S) = r} (-1)^{|S|}\right) \\
        &= \frac{\chi_M(q) + (q-1) \chi_M(0)}{q} \qedhere
    \end{align*}
\end{proof}

The following is a standard result relating the truncation of a matroid and the restriction and contraction of flats, which follows immediately from definition. 
\begin{lemma}
\label{lem:truncation-flats}
    Let $M$ be a matroid of rank $r \geqslant 2$, and let $F \in \calL(M)_{\leqslant r-2}$. Then
    \[
        \tr(M)/F = \tr(M/F),
    \]
    and
    \[
        \tr(M)|F = M|F.
    \]
\end{lemma}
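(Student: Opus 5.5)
Both identities come from comparing rank functions directly. Flats are determined by the rank function, so that comparison is enough.

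Fix $F \in \calL(M)_{\leqslant r-2}$ and recall $\rk_{\tr(M)}(S) = \min\{\rk_M(S), r-1\}$ from Definition~\ref{defn:truncation}.

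For the restriction, take any $S \subseteq F$. Then $\rk_M(S) \leqslant \rk_M(F) \leqslant r-2 < r-1$, so the minimum is attained at $\rk_M(S)$ and $\rk_{\tr(M)}(S) = \rk_M(S)$. The restrictions $\tr(M)|F$ and $M|F$ therefore have the same rank function on the same ground set, so they are equal. We should also check that $F$ is still a flat of $\tr(M)$, so that the restriction is meant in the same sense. This holds because $\calL(\tr(M)) = \calL(M)\setminus\calL(M)_{r-1}$ and $\rk_M(F)\leqslant r-2$.

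For the contraction, use the contraction rank formula on the ground set $E\setminus F$. For $S\subseteq E\setminus F$,
\[
\rk_{\tr(M)/F}(S) = \min\{\rk_M(S\cup F), r-1\} - \rk_M(F).
\]
Since $\rk_M(F)\leqslant r-2$, the quantity $\rk_{\tr(M)}(F)$ equals $\rk_M(F)$, which is why only one correction term appears. Subtracting the constant $\rk_M(F)$ commutes with the minimum, so this equals
\[
\min\{\rk_M(S\cup F)-\rk_M(F),\ r-1-\rk_M(F)\} = \min\{\rk_{M/F}(S),\ \rk(M/F)-1\}.
\]
This last expression is exactly $\rk_{\tr(M/F)}(S)$. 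For it to be a legitimate truncation we need $\rk(M/F) = r - \rk_M(F) \geqslant 2 \geqslant 1$, which holds.

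No step is a real obstacle. The only care needed is the hypothesis $\rk_M(F)\leqslant r-2$: it guarantees that $F$ is a flat of $\tr(M)$, that its rank is unchanged by truncation, and that $M/F$ has positive rank, so $\tr(M/F)$ is defined. If $\rk_M(F) = r-1$ or $r$, these conditions can fail. If one prefers to work with lattices of flats, an alternative is to use the interval identifications $\calL(M|F)\cong[\varnothing,F]$ and $\calL(M/F)\cong[F,E]$ and check that deleting the rank-$(r-1)$ corank-one flats commutes with these identifications. The rank-function argument is cleaner, though.
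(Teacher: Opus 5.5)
Your proof is correct and is the direct check from the rank-function description $\rk_{\tr(M)}(S)=\min\{\rk_M(S),r-1\}$ that the paper means when it says the lemma ``follows immediately from definition'' (the paper gives no further argument). The two facts you isolate, that $\rk_M(F)\leqslant r-2$ forces $\rk_{\tr(M)}(F)=\rk_M(F)$ and that $\min\{a,b\}-c=\min\{a-c,b-c\}$, are all that is needed.
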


    

We can now state our result concerning the topological zeta function of the truncation of a matroid $M$ and its relation to the M\"obius inversion.

\begin{thm}
\label{thm:truncation-zeta}
    Let $M$ be a matroid over ground set $E$. The topological zeta function for the truncation $\tr(M)$ can be written in terms of $Z^{\top}_M(s)$ and its M\"{o}bius inversion $Y_M(s)$ as
    \[
        Z^{\top}_{\tr(M)}(s) = Z^{\top}_M(s) + \frac{1}{|E|s + \rk(M) - 1} Y_M(s).
    \]
\end{thm}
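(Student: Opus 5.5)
The plan is to apply the recurrence of Proposition~\ref{prop:zeta-recurrence} directly to $\tr(M)$, and then rewrite everything in terms of flats of $M$. If $M$ has loops, both sides vanish ($\tr(M)$ has the same loops, and $Y_M$ is a combination of zeta functions of restrictions $M|F$, all of which contain the loops). So assume $M$ is loopless of rank $r$. The cases $r\le 1$ I would check by hand from the conventions: for $r=1$, $\tr(M)$ has rank $0$, and one compares with $Y_M = Z^{\top}_M - 1$.

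For $r\geqslant 2$, Definition~\ref{defn:truncation} says the proper flats of $\tr(M)$ are exactly the flats $F\in\calL(M)_{\leqslant r-2}$. By Lemma~\ref{lem:truncation-flats}, for such $F$ we have $\tr(M)|F=M|F$ and $\tr(M)/F=\tr(M/F)$. Proposition~\ref{prop:zeta-recurrence} therefore gives
\[
(|E|s+r-1)\,Z^{\top}_{\tr(M)}(s)=\sum_{F\in\calL(M)_{\leqslant r-2}}\overline{\chi}_{\tr(M/F)}(1)\,Z^{\top}_{M|F}(s).
\]
Dividing the first formula of Lemma~\ref{lem:truncation-char-poly} by $q-1$ gives $\overline{\chi}_{\tr(N)}(q)=\bigl(\overline{\chi}_N(q)+\chi_N(0)\bigr)/q$. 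Evaluating at $q=1$ with $N=M/F$, and using $\chi_{M/F}(0)=\mu(F,E)$, each coefficient becomes $\overline{\chi}_{M/F}(1)+\mu(F,E)$.

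The sum then splits into two pieces.
\begin{itemize}
\item The $\overline{\chi}_{M/F}(1)$ piece. Compare it with Proposition~\ref{prop:zeta-recurrence} for $M$ itself, which sums over all proper flats. The only extra terms there come from hyperplanes $H$, where $M/H$ has rank $1$ and $\overline{\chi}_{M/H}(1)=1$. So this piece equals $(|E|s+r)Z^{\top}_M-\sum_{H}Z^{\top}_{M|H}$.
\item The $\mu(F,E)$ piece. By Definition~\ref{def:upsilon} it equals $Y_M-Z^{\top}_M-\sum_H\mu(H,E)Z^{\top}_{M|H}$. Since $\mu(H,E)=-1$, this is $Y_M-Z^{\top}_M+\sum_H Z^{\top}_{M|H}$.
\end{itemize}

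Adding the two pieces, the hyperplane sums cancel and we obtain $(|E|s+r-1)Z^{\top}_M+Y_M$. Dividing by $|E|s+r-1$ gives the theorem.

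The main obstacle is bookkeeping rather than ideas. The point needing care is the reduced characteristic polynomial identity at $q=1$: the second display in Lemma~\ref{lem:truncation-char-poly} should be read as $\overline{\chi}_{\tr(N)}(q)=\bigl(\overline{\chi}_N(q)+\chi_N(0)\bigr)/q$, derived from the first display as above. I would also double-check the degenerate small-rank cases against the conventions of Definition~\ref{def:zeta-flags}.
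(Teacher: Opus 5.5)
Your proposal is correct and follows essentially the same route as the paper: apply the recurrence to $\tr(M)$, rewrite each coefficient as $\overline{\chi}_{M/F}(1)+\mu(F,E)$ via Lemma~\ref{lem:truncation-flats} and Lemma~\ref{lem:truncation-char-poly}, and reassemble the result into $(|E|s+\rk(M))Z^{\top}_M(s)$ plus $Y_M(s)-Z^{\top}_M(s)$. Your cancellation of the hyperplane sums is the paper's observation that $\overline{\chi}_{M/H}(1)+\chi_{M/H}(0)=0$ for a hyperplane $H$, and your reading $\overline{\chi}_{\tr(N)}(q)=\bigl(\overline{\chi}_N(q)+\chi_N(0)\bigr)/q$ together with your separate handling of loops and rank $\leqslant 1$ is a slightly more careful version of the argument the paper actually gives.
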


\begin{proof}
    To begin, we use the recurrence relation from Proposition \ref{prop:zeta-recurrence}.
    \[
        Z^{\top}_{\tr(M)}(s) = \frac{1}{|E|s + \rk(M) - 1} \sum_{\substack{F \in \calL(\tr(M)) \\ F \neq E}} \overline{\chi}_{\tr(M)/F}(1) Z^{\top}_{\tr(M)|F}(s)
    \]
    Now using Lemma~\ref{lem:truncation-flats} and Lemma~\ref{lem:truncation-char-poly}, we obtain
    \begin{align*}
        Z^{\top}_{\tr(M)}(s) &= \frac{1}{|E|s + \rk(M) - 1} \sum_{\substack{F \in \calL(\tr(M)) \\ F \neq E}} \overline{\chi}_{\tr(M/F)}(1) Z^{\top}_{M|F}(s) \\
        &= \frac{1}{|E|s + \rk(M) - 1} \sum_{\substack{F \in \calL(\tr(M)) \\ F \neq E}} [\overline{\chi}_{M/F}(1) + \chi_{M/F}(0)] Z^{\top}_{M|F}(s).
    \end{align*}
    Furthermore, if $H \in \calL(M)$ is a hyperplane, then the characteristic polynomial $\chi_{M/H}(q) = q-1$, so $\overline{\chi}_{M/H}(1) + \chi_{M/H}(0) = 0$. Thus we can extend the sum from all proper flats in $\calL(\tr(M))$ to all proper flats in $\calL(M)$.
    \[
        Z^{\top}_{\tr(M)}(s) = \frac{1}{|E|s + \rk(M) - 1} \sum_{\substack{F \in \calL(M) \\ F \neq E}} [\overline{\chi}_{M/F}(1) + \chi_{M/F}(0)] Z^{\top}_{M|F}(s).
    \]
    Then by the reverse application of the recurrence relation in Proposition \ref{prop:zeta-recurrence}
    \[
        Z^{\top}_{\tr(M)}(s) = \frac{|E|s + \rk(M)}{|E|s + \rk(M)-1} Z^{\top}_M(s) + \frac{1}{|E|s + \rk(M) - 1} \sum_{\substack{F \in \calL(M) \\ F \neq E}} \chi_{M/F}(0) Z^{\top}_{M|F}(s).
    \]
    Notice that in the sum, $\chi_{M/F}(0) = \mu_M(F, E)$, and recall that $\mu_M(E,E) = 1$, so we have
    \begin{align*}
        Z^{\top}_{\tr(M)}(s) &= Z^{\top}_M(s) + \frac{1}{|E|s + \rk(M) - 1} \sum_{F \in \calL(M)} \mu_M(E, F) Z^{\top}_{M|F}(s) \\
        &= Z^{\top}_{M}(s) + \frac{1}{|E|s + \rk(M) - 1} Y_M(s).
    \end{align*}
\end{proof}

\subsection{Free Extension}
We now proceed to recall the free extension of a matroid $M$.
\begin{definition}
\label{def:free-extension}
    Let $M$ be a matroid over the ground set $E$ with bases $\calB(M)$ and independent sets $\calI(M)$. The \textbf{free extension} of $M$ by $e$, written $M + e$, is the matroid over the ground set $E \cup e$ defined by the bases
    \[
        \calB(M + e) = \calB(M) \cup \{I \cup \{e\} : I \in \calI(M) \text{ and } |I| = \rk(M) - 1\}.
    \]
    Cryptomorphically, the free extension is defined by the rank function $\rk_{M+e} : 2^{E \cup e} \to \Z_{\geqslant 0}$ such that for any $S \subseteq E$
    \[
        \rk_{M+e}(S) = \rk_M(S),
    \]
    and
    \[
        \rk_{M+e}(S \cup \{e\}) = \begin{cases}
            \rk_M(S) & \text{if } \cl_M(S) = E; \\
            \rk_M(S) + 1 & \text{if } \cl_M(S) \neq E.
        \end{cases}
    \]
\end{definition}

We can recover an expression for the topological zeta function of the free extension of a matroid essentially as a corollary of Theorem~\ref{thm:truncation-zeta}.

\begin{thm}
\label{thm:free-extension-zeta}
    Let $M$ be a matroid over ground set $E$. The topological zeta function for the free extension $M + e$ can be written in terms of $Z^{\top}_M(s)$ and its M\"{o}bius inversion $Y_M(s)$ as
    \[
        Z^{\top}_{M+e}(s) = \frac{1}{s+1} \left(Z^{\top}_M(s) - \frac{s}{(|E|+1)s + \rk(M)} Y_M(s) \right).
    \]
\end{thm}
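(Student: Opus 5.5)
The plan is to realize the free extension as a truncation of a direct sum, and then apply Theorem~\ref{thm:truncation-zeta}. Let $M$ have rank $r$, and let $U_{1,1}$ denote the rank-one matroid on the single element $\{e\}$. The first step is to check that
\[
    M + e = \tr(M \oplus U_{1,1}).
\]
The matroid $M \oplus U_{1,1}$ has rank $r+1$ on $E \cup e$, so its truncation has rank $r$.

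We verify this identity with the rank functions in Definition~\ref{def:free-extension} and Definition~\ref{defn:truncation}.
\begin{itemize}
    \item For $S \subseteq E$, we have $\rk_{\tr(M\oplus U_{1,1})}(S) = \min\{\rk_M(S), r\} = \rk_M(S)$.
    \item For $S\cup\{e\}$, we get $\min\{\rk_M(S)+1, r\}$. This equals $\rk_M(S)$ exactly when $\rk_M(S) = r$, i.e.\ when $\cl_M(S) = E$, and equals $\rk_M(S)+1$ otherwise.
\end{itemize}
This matches the free extension, so the identity holds. I expect this identification to be the only conceptual step; everything afterwards is bookkeeping.

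Next, apply Theorem~\ref{thm:truncation-zeta} to $N = M \oplus U_{1,1}$. The ground set of $N$ has size $|E|+1$ and $\rk(N) = r+1$, so
\[
    Z^{\top}_{M+e}(s) = Z^{\top}_{N}(s) + \frac{1}{(|E|+1)s + r}\, Y_{N}(s).
\]
By Proposition~\ref{prop:zeta-multiplicative} and Proposition~\ref{prop:zeta-uniform}, $Z^{\top}_N(s) = Z^{\top}_M(s)/(s+1)$. By Proposition~\ref{prop:upsilon-multiplicative}, $Y_N(s) = Y_M(s)\,Y_{U_{1,1}}(s)$.

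It remains to compute $Y_{U_{1,1}}(s)$ directly from Definition~\ref{def:upsilon}. The flats of $U_{1,1}$ are $\varnothing$ and $\{e\}$, with $\mu(\varnothing,\{e\}) = -1$, $Z^{\top}_{U_{0,0}} = 1$ and $Z^{\top}_{U_{1,1}} = 1/(s+1)$. Hence
\[
    Y_{U_{1,1}}(s) = \frac{1}{s+1} - 1 = \frac{-s}{s+1},
\]
which also agrees with Corollary~\ref{cor:upsilon-uniform}. Substituting gives
\[
    Z^{\top}_{M+e}(s) = \frac{1}{s+1}\left(Z^{\top}_M(s) - \frac{s}{(|E|+1)s + \rk(M)}\, Y_M(s)\right),
\]
as claimed.

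Degenerate cases should be checked separately. If $M$ has loops, then so does $M+e$, and both sides vanish. The trivial matroid $M = U_{0,0}$ also needs a direct check: here $M + e$ is a loop, so $Z^{\top}_{M+e} = 0$. On the right, $Z^{\top}_M = Y_M = 1$ and $\rk(M) = 0$, giving
\[
    \frac{1}{s+1}\left(1 - \frac{s}{s}\right) = 0,
\]
so the formula still holds.
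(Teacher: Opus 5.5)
Your proposal is correct and follows the same route as the paper: identify $M+e=\tr(M\oplus U_{1,1})$, apply Theorem~\ref{thm:truncation-zeta} to $M\oplus U_{1,1}$, and use multiplicativity of $Z^{\top}$ and $Y$ together with $Y_{U_{1,1}}(s)=-s/(s+1)$. You also supply details that the paper's one-line proof leaves implicit: the rank-function verification, the tacit use of Proposition~\ref{prop:upsilon-multiplicative}, and the degenerate cases (when $M$ has loops, $Y_M=0$ because every flat contains the loops).
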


\begin{proof}
    We note that $M + e = \tr(M \oplus U_{1,1})$. Therefore, because the topological zeta function is multiplicative over direct sums, we apply Theorem \ref{thm:truncation-zeta} to $Z^{\top}_M(s) \cdot \frac{1}{s+1}$.
\end{proof}

\section{Taylor coefficients of topological zeta functions of matroids}
\label{sec:taylor}
In this section, we prove a result that characterizes the low order Taylor coefficients of the topological zeta function in terms of what we call the ``girth'' of a matroid. We explain how the girth can be related to uniform matroids via truncation, and propose a conjecture about the Taylor coefficients of truncated matroids. We begin with the definition.

\begin{definition}
\label{def:girth}
    Let $M$ be a matroid over ground set $E$. We define the \textbf{girth} $\gir(M)$ of a matroid to be the size of the smallest circuit in $M$; if a matroid has no circuits, then we say $\gir(M) = |E| + 1$.
\end{definition}

Note that if $M_G$ is a graphic matroid, then $\gir(M_G)$ corresponds to the girth of the underlying graph $G$, i.e., the size of the smallest cycle present in $G$. 

We now state the main result in this section.
\begin{thm}
\label{thm:girth}
    Let $M$ be a matroid over ground set $E$ of girth $g > 0$. Then for all integers $0 \leqslant k < g$, the derivative of the topological zeta function evaluated at $0$ is
    \[
        \left(\frac{d^k}{ds^k}Z^{\top}_M(s) \right) \bigg|_{s=0} = (-1)^k |E|^{\overline{k}},
    \]
    where $|E|^{\overline{k}} = |E|(|E|+1) \cdots(|E| + k-1)$ is the rising $k^{th}$ factorial of $|E|$.
\end{thm}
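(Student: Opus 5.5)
The plan is to prove the stronger congruence
\[
    Z^{\top}_M(s) \equiv (1+s)^{-|E|} \pmod{s^{g}}
\]
for every loopless matroid $M$ of girth at least $g$. The theorem follows from it, because the $k$-th derivative at $0$ of $(1+s)^{-n}$ is $(-1)^k n^{\overline{k}}$. A useful sanity check is that this is exactly the low-order behaviour of Proposition~\ref{prop:zeta-uniform}: there $a_k=(-1)^k\multiset{n}{k}$ for $k\leqslant r$, and $k!\,a_k = (-1)^k n^{\overline{k}}$. Since the girth is $g$, every set of size at most $g-1$ is independent and every set of size at most $g-2$ is a flat. Hence $\mathrm{tr}^{\,r-g+1}(M)=U_{g-1,|E|}$, so the claimed congruence is consistent with the uniform case. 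Note also that $g>0$ means $M$ is loopless.

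I would argue by strong induction on $|E|$, using the recurrence of Proposition~\ref{prop:zeta-recurrence}. For every proper flat $F$, the restriction $M|F$ is loopless and has girth at least $g$, since its circuits are circuits of $M$. If $|F|<g$, then $M|F=U_{|F|,|F|}$ and $Z^{\top}_{M|F}=(1+s)^{-|F|}$ exactly, by Proposition~\ref{prop:zeta-uniform}. Otherwise the induction hypothesis gives $Z^{\top}_{M|F}\equiv(1+s)^{-|F|}$ mod $s^g$. Because $|E|s+\rk(M)$ is a unit in $\Q[[s]]$ (as $\rk(M)>0$), it suffices to verify
\[
    (|E|s+\rk(M))(1+s)^{-|E|} \equiv \sum_{F\neq E} \overline{\chi}_{M/F}(1)\,(1+s)^{-|F|} \pmod{s^g}.
\]
Multiplying by $(1+s)^{|E|}$ and expanding $(1+s)^{|E|-|F|}$ binomially, this reduces to the family of identities
\[
    \sum_{F\neq E}\overline{\chi}_{M/F}(1)\binom{|E\setminus F|}{j} = \begin{cases}\rk(M) & j=0,\\ |E| & j=1,\\ 0 & 2\leqslant j<g.\end{cases}
\]
The case $j=0$ is exactly \cite[Proposition 2.2]{jensen2021motivic}, the identity already used in the proof of Theorem~\ref{thm:upsilon-recurrence}.

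For $j\geqslant 1$, I would interpret $\binom{|E\setminus F|}{j}$ as the number of $j$-subsets $T\subseteq E$ disjoint from $F$, and swap the sums. This gives $\sum_{|T|=j}\sum_{F\cap T=\varnothing,\,F\neq E}\overline{\chi}_{M/F}(1)$. Since $j<g$, each such $T$ is independent. I would then rewrite $\overline{\chi}_{M/F}(1)$ through the subset expansion $\chi_{M/F}(q)=\sum_{S\subseteq E\setminus F}(-1)^{|S|}q^{\rk(M)-\rk(F\cup S)}$ and differentiate at $q=1$, since $\overline{\chi}(1)=\chi'(1)$. The goal is to show that the inner sum over flats avoiding the independent set $T$ depends only on $j$ and not on $M$.

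This inner evaluation is the step I expect to be the main obstacle. If a direct Möbius-function manipulation fails, I would fall back on comparing with the uniform matroid. The identities for $j<g$ should depend only on flats of rank at most $j$ avoiding $T$, and in that range $\calL(M)$ agrees with the lattice of $U_{g-1,|E|}$. One could then read off the values from Proposition~\ref{prop:zeta-uniform} run backwards. A second fallback is to use Theorem~\ref{thm:truncation-zeta} repeatedly: truncating down to $U_{g-1,|E|}$ reduces the statement to showing $Y_N(s)=O(s^{g})$ along the chain of intermediate truncations $N$. I would attack that bound by induction through Theorem~\ref{thm:upsilon-recurrence}, using $Y_{U_{m,m}}=(-s/(s+1))^m$, which follows from Proposition~\ref{prop:upsilon-multiplicative} together with Corollary~\ref{cor:upsilon-uniform} for $U_{1,1}$.
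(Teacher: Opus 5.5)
Your reduction is sound. Proving $Z^{\top}_M(s)\equiv(1+s)^{-|E|}\pmod{s^g}$ by strong induction on $|E|$ through Proposition~\ref{prop:zeta-recurrence}, multiplying by the unit $(1+s)^{|E|}$, and comparing coefficients does reduce the theorem to showing that $S_j:=\sum_{F\neq E}\overline{\chi}_{M/F}(1)\binom{|E\setminus F|}{j}$ equals $\rk(M)$, $|E|$, $0$ for $j=0$, $j=1$, $2\leqslant j<g$ respectively.

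The gap is that the proposal stops exactly where the content of the theorem lies. You never evaluate $\sum_{F\neq E,\,F\cap T=\varnothing}\overline{\chi}_{M/F}(1)$ for an independent $j$-set $T$, and none of the routes you sketch gets there:
\begin{itemize}
\item The subset expansion followed by differentiation at $q=1$ is left uncomputed.
\item The first fallback rests on a false premise. The inner sum runs over all proper flats missing $T$, not just those of rank at most $j$. In $U_{3,4}$ with $T=\{t\}$, the rank-$2$ flats missing $t$ contribute $3$, and the total $1$ arises only from cancellation across ranks. Also, $M$ and $U_{g-1,|E|}$ have different ranks, so nothing transfers directly.
\item The second fallback needs $Y_N(s)=O(s^{g})$ along the truncation chain. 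That is a weak form of Conjecture~\ref{conj:upsilon}, which the paper leaves open.
\end{itemize}

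The missing idea is to sum over flats \emph{containing} a set, exactly as the paper does in Lemma~\ref{lemma:flat-sum}. Use inclusion--exclusion over $A\subseteq T$, then Proposition~\ref{prop:two-flats} at $q=1$ with $F_1$ the closure of $A$ and $F_2=E$. Since $j<g$, the set $T$ is independent, so $\rk(A)=|A|$ for every $A\subseteq T$. This gives
\[
\sum_{\substack{F\neq E\\ F\cap T=\varnothing}}\overline{\chi}_{M/F}(1)=\sum_{A\subseteq T}(-1)^{|A|}\sum_{\substack{F\neq E\\ A\subseteq F}}\overline{\chi}_{M/F}(1)=\sum_{A\subseteq T}(-1)^{|A|}\bigl(\rk(M)-\rk(A)\bigr)=\sum_{i=0}^{j}(-1)^i\binom{j}{i}\bigl(\rk(M)-i\bigr).
\]
The right side is $1$ for $j=1$ and $0$ for $j\geqslant 2$. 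Hence $S_1=|E|$ and $S_j=0$ for $2\leqslant j<g$, which closes your induction.

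With this step supplied, your route is genuinely simpler than the paper's. The paper differentiates the recurrence $k$ times (Lemma~\ref{prop:k-derivative}) and runs a second induction on $k$. It then expands $|E|^{\overline{k}}$ and $|F|^{i}$ via Stirling numbers and the sets $\calD^i_j$, and needs Lemma~\ref{lemma:stirling-both}. Working modulo $s^g$ against $(1+s)^{-|E|}$ collapses all of that into one binomial identity.

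One small correction: $g>0$ does not make $M$ loopless. A loop is a circuit of size $1$, and then $Z^{\top}_M=0$, so the statement fails at $k=0$. You need the paper's standing loopless assumption, i.e.\ $g\geqslant 2$.
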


\begin{remark}
    Note that the Taylor coefficient in Theorem~\ref{thm:girth} is precisely the Taylor coefficient of the topological zeta function of a uniform matroid (Proposition~\ref{prop:zeta-uniform}). That is, if $M$ is a matroid of rank $r$ and the truncation $\tr(M)$ is a uniform matroid, then for all integers $0 \leqslant k < r$ we have
    \[
        \left(\frac{d^k}{ds^k} Z^{\top}_M(s)\right) \bigg|_{s=0} = \left(\frac{d^k}{ds^k} Z^{\top}_{\tr(M)}(s) \right) \bigg|_{s=0}.
    \]
\end{remark}

This motivates the following conjecture, which can be seen as a generalization of Theorem \ref{thm:girth} to all truncations, not just uniform truncations.

\begin{conjecture}
    \label{conj:truncation}
    Let $M$ be a matroid of rank $r > 0$. Then for all integers $0 \leqslant k < \rk(M)$ we have
    \[
        \left(\frac{d^k}{ds^k} Z^{\top}_M(s)\right) \bigg|_{s=0} = \left(\frac{d^k}{ds^k} Z^{\top}_{\tr(M)}(s) \right) \bigg|_{s=0}.
    \]
\end{conjecture}

\begin{remark}
    Note that this would imply that if two matroids $M_1$ and $M_2$ have the same lattice up to a certain rank, i.e. $\calL(M_1)_{\leqslant r} = \calL(M_2)_{\leqslant r}$, then the first $r$ Taylor coefficients of $Z^{\top}_{M_1}(s)$ and $Z^{\top}_{M_2}(s)$ would be the same. In this sense, the low order Taylor coefficients of $Z^{\top}_M(s)$ would encode information about the lattice $\calL(M)$.
\end{remark}

In addition, as Theorem \ref{thm:truncation-zeta} describes the topological zeta function of the truncation in terms of the M\"{o}bius inversion $Y_M(s)$, we propose a stronger conjecture about the behavior of the $Y_M$ function which implies Conjecture \ref{conj:truncation}.

\begin{conjecture}
    \label{conj:upsilon}
    Let $M$ be a matroid of rank $r$. Then the Taylor expansion of $Y_M(s)$ centered at $s=0$ is given by
    \[
        Y_M(s) = (-1)^r |\calB(M)| s^r + O(s^{r+1}).
    \]
\end{conjecture}

Now we will develop the technical tools we will need to prove Theorem \ref{thm:girth}. First, we recall Stirling numbers of the first and second kind, and state some basic facts about them.
\begin{definition}
    The \textbf{(unsigned) Stirling numbers of the first kind, $c(n,k)$}, count the number of permutations in the symmetric group $S_n$ with $k$ disjoint cycles. When $n = k = 0$, we set $c(0,0) = 1$. The \textbf{Stirling numbers of the second kind, $S(n,k)$}, count the number of partitions of the set $[n]$ into $k$ parts.
\end{definition}

\begin{fact}
\label{fact:stirling-rising-factorial}
    Let $n, k \geqslant 0$ be integers.
    The rising factorial $n^{\overline{k}}$ can be written as
    \[
       n^{\overline{k}} = \sum_{i=1}^k c(k, i)x^i.
    \]
\end{fact}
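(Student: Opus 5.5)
The formula is meant with $x = n$: it asserts the polynomial identity $x^{\overline{k}} = x(x+1)\cdots(x+k-1) = \sum_{i} c(k,i)x^i$. Once proved as an identity in $\Z[x]$, we specialize to $x = n$. Since $c(k,0)=0$ for $k \geqslant 1$, starting the sum at $i=1$ or $i=0$ makes no difference for $k\geqslant 1$. For $k=0$ we read the sum as starting at $i=0$, so both sides equal $1$ because $c(0,0)=1$. I would prove the identity by induction on $k$, using the standard recurrence for the unsigned Stirling numbers of the first kind.

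\textbf{Step 1: the recurrence.} For $k \geqslant 0$ and $i \geqslant 1$,
\[
c(k+1,i) = c(k,i-1) + k\,c(k,i).
\]
Take a permutation of $[k+1]$ and delete the element $k+1$ from its cycle. If $k+1$ was a fixed point, we obtain a permutation of $[k]$ with $i-1$ cycles. Otherwise we obtain a permutation of $[k]$ with $i$ cycles, together with a record of which element $j \in [k]$ was immediately followed by $k+1$ in its cycle, and there are $k$ choices for $j$. Conversely, inserting $k+1$ either as a new fixed point or right after a chosen $j$ reverses this. So the correspondence is a bijection and the recurrence follows.

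\textbf{Step 2: induction.} Assume $x^{\overline{k}} = \sum_i c(k,i)x^i$. Then
\[
x^{\overline{k+1}} = (x+k)\,x^{\overline{k}} = \sum_i c(k,i)x^{i+1} + \sum_i k\,c(k,i)x^i .
\]
Collecting the coefficient of $x^i$ gives $c(k,i-1)+k\,c(k,i)$, which equals $c(k+1,i)$ by Step 1. This closes the induction.

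There is no real obstacle here. The only points needing care are the boundary conventions $c(0,0)=1$ and $c(k,0)=0$ for $k \geqslant 1$, and reading the $x$ in the displayed formula as the variable later specialized to $n$ (or to $|E|$).
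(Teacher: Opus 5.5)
Your proof is correct. The paper gives no proof of this statement; it records it as a standard fact, next to the recurrence $c(n,k)=(n-1)c(n-1,k)+c(n-1,k-1)$ listed as Fact~\ref{fact:stirling-recurrence}. So there is no route in the paper to compare against. Your Step~1 is exactly that recurrence, shifted by one index, with a bijective proof. Your Step~2 is the standard induction that derives the rising-factorial expansion from it, which makes explicit that Fact~\ref{fact:stirling-rising-factorial} follows from Fact~\ref{fact:stirling-recurrence}.

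You were right to repair the statement as printed:
\begin{itemize}
\item The $x$ on the right must be $n$; it is later specialized to $|E|$ and $|F|$.
\item For $k=0$ the sum $\sum_{i=1}^{0}$ is empty while $n^{\overline{0}}=1$, so the lower limit must be $i=0$ there. Equivalently, the formula as written holds only for $k\geqslant 1$.
\end{itemize}
This repair also matters downstream. In the proof of Theorem~\ref{thm:girth} at $k=1$, term $A$ uses $|E|^{\overline{0}}=1$. The subsequent reindexing justified by ``$c(k-1,0)=0$'' fails there, since $c(0,0)=1$. The final expression $\sum_{i=1}^{k}c(k-1,i-1)|E|^i$ is nonetheless correct precisely under your reading with the sum starting at $i=0$.

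The only point left implicit in your Step~2 is the constant coefficient $k\,c(k,0)$. It vanishes because either $k=0$ or $c(k,0)=0$, which matches $c(k+1,0)=0$, so the induction closes in all degrees.
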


\begin{fact}
\label{fact:stirling-recurrence}
    Let $n, k \geqslant 0$ be integers. $c(n,k)$ satisfies the recurrence relation
        \[
            c(n,k) = (n-1) c(n-1,k) + c(n-1,k-1).
        \]
\end{fact}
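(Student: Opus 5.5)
We prove the recurrence for $n \geqslant 1$ by a bijective count on $S_n$, classifying each permutation by what happens to the largest element $n$. Here $c(n,k) = 0$ when $k > n$, or when $k = 0 < n$. The relation is not meant at $n = 0$, where the index $n-1$ is meaningless. We write permutations in disjoint cycle notation, and let $A$ denote the set of permutations $\sigma \in S_n$ with exactly $k$ cycles. We split $A$ into two disjoint pieces.

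\emph{Case 1: $n$ is a fixed point of $\sigma$.} Then $(n)$ is a cycle of length one. Deleting it gives a permutation of $[n-1]$ with exactly $k-1$ cycles. Conversely, adjoining the fixed point $n$ to any permutation of $[n-1]$ with $k-1$ cycles gives an element of $A$ in this case. So this map is a bijection, and Case 1 contributes $c(n-1,k-1)$. When $n = 1$ this is $c(0,k-1)$, which equals $1$ exactly when $k = 1$, consistent with the convention $c(0,0) = 1$.

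\emph{Case 2: $n$ lies in a cycle of length at least two.} Define $\sigma' \in S_{n-1}$ by removing $n$ from its cycle. Explicitly, set $\sigma'(\sigma^{-1}(n)) = \sigma(n)$, and $\sigma'(i) = \sigma(i)$ for every other $i \in [n-1]$. Since the cycle containing $n$ had length at least two, it still has length at least one after removing $n$. Hence $\sigma'$ has exactly $k$ cycles.

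For the reverse direction, take any permutation $\tau$ of $[n-1]$ with $k$ cycles and any $j \in [n-1]$. Insert $n$ immediately after $j$ in its cycle, that is, set $j \mapsto n \mapsto \tau(j)$. This yields an element of $A$ in Case 2, and it recovers $\tau$ under the removal map. Conversely, $\sigma$ is determined by $\sigma'$ together with $j = \sigma^{-1}(n)$. So Case 2 is in bijection with pairs $(\tau, j)$, and contributes $(n-1)\,c(n-1,k)$.

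Adding the two cases gives $c(n,k) = (n-1)c(n-1,k) + c(n-1,k-1)$. The only step needing care is checking that the insertion and removal maps in Case 2 are mutually inverse. This holds because the pair (removed permutation, predecessor of $n$) is precisely the data needed to reinsert $n$.
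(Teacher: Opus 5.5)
Your proof is correct and complete. The paper gives no proof of this fact and simply quotes it as a standard identity; your argument is the standard one you would supply: split $S_n$ by whether $n$ is a fixed point, and otherwise splice $n$ out of its cycle while recording $\sigma^{-1}(n)$.

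Your restriction to $n \geqslant 1$ is also the right reading of the statement, since at $n=0$ it would involve $c(-1,\cdot)$. It is all the paper needs, because the recurrence is only invoked in the proof of Lemma~\ref{lemma:stirling-both} with first argument at least $1$. The $k=0$ case needs only $c(n-1,-1)=0$, which your Case~1 count gives automatically as an empty set.
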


\begin{fact}[{\cite[Section 6.1]{knuth1994concrete}}]
\label{fact:stirling-both}
Let $n, k \geqslant 1$ be integers. The Stirling numbers of both kinds and the falling factorials are related by the equality
\[
    \sum_{n=m}^n c(n,k) S(k,m) = {n \choose m} (n-1)^{\underline{n-m}}.
\]
\end{fact}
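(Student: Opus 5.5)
The identity should be read with the summation index as $k$, i.e.\ $\sum_{k=m}^{n} c(n,k)S(k,m) = \binom{n}{m}(n-1)^{\underline{n-m}}$, for $n \geqslant m \geqslant 1$. The typed form, which sums over $n$ with upper limit $n$, is a typo. My plan is a double-counting argument. First I rewrite the right-hand side in a form that suggests a count:
\[
\binom{n}{m}(n-1)^{\underline{n-m}} = \frac{n!}{m!\,(n-m)!}\cdot\frac{(n-1)!}{(m-1)!} = \frac{n!}{m!}\binom{n-1}{m-1}.
\]
This is the Lah number $L(n,m)$. I then show that both sides count the set $\mathcal{P}(n,m)$ of ways to partition $[n]$ into $m$ unordered nonempty blocks, each block equipped with a permutation of its own elements.

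For the left-hand side, I choose a permutation $\sigma \in S_n$ with exactly $k$ cycles, in $c(n,k)$ ways. I then partition its set of $k$ cycles into $m$ nonempty groups, in $S(k,m)$ ways. Each group of cycles has as support a block of $[n]$, and the cycles in the group form a permutation of that block. Conversely, an element of $\mathcal{P}(n,m)$ determines $\sigma$ as the product of the block permutations, and it determines the grouping of the cycles of $\sigma$ by blocks. So this is a bijection, and summing over $k$ gives $|\mathcal{P}(n,m)| = \sum_{k} c(n,k)S(k,m)$. The terms with $k<m$ vanish because $S(k,m)=0$ there, so the sum can start at $k=m$.

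For the right-hand side, a block of size $a$ has $a!$ permutations, which is the same as its number of linear orderings. So $|\mathcal{P}(n,m)|$ equals the number of ways to partition $[n]$ into $m$ unordered nonempty linearly ordered lists. I count these lists directly. I write all of $[n]$ as a word, in $n!$ ways. I cut the word into $m$ nonempty consecutive segments by choosing $m-1$ of the $n-1$ gaps, in $\binom{n-1}{m-1}$ ways. Each unordered collection of $m$ lists then arises from exactly $m!$ orderings of the lists, hence from exactly $m!$ (word, cut) pairs. This gives $\frac{n!}{m!}\binom{n-1}{m-1}$.

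The only step needing care is the bijection in the left-hand count. A family of block permutations must be matched with a global permutation together with a set partition of its cycles. This holds because the cycles of a product of permutations on disjoint supports are exactly the union of the cycles of the factors. The rest is routine bookkeeping.
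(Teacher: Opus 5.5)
Your proof is correct. Reading the sum as $\sum_{k=m}^{n} c(n,k)S(k,m)$ is the intended statement, and the rewriting $\binom{n}{m}(n-1)^{\underline{n-m}}=\frac{n!}{m!}\binom{n-1}{m-1}$ is right. Both of your counts are sound. The cycle-grouping bijection needs fixed points to count as $1$-cycles, which is exactly the convention behind $c(n,k)$.

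Your route differs from the paper's. The paper gives no argument here and only cites Concrete Mathematics. The shape of its right-hand side is what the standard algebraic derivation produces:
\begin{itemize}
\item Expanding $x^{\overline{n}}=\sum_k c(n,k)x^k$ and then $x^k=\sum_m S(k,m)x^{\underline{m}}$ identifies the left-hand side as the coefficient of $x^{\underline{m}}$ in $x^{\overline{n}}$.
\item On the other hand, $x^{\overline{n}}=(x+n-1)^{\underline{n}}$. The falling-factorial Vandermonde convolution $(x+y)^{\underline{n}}=\sum_m\binom{n}{m}x^{\underline{m}}y^{\underline{n-m}}$ with $y=n-1$ gives that coefficient as $\binom{n}{m}(n-1)^{\underline{n-m}}$.
\item Comparing coefficients in the falling-factorial basis finishes the proof.
\end{itemize}
That route takes a few lines once the basis-change expansions are granted. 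It also lands directly on the paper's form, which is the form Lemma~\ref{lemma:stirling-both} then manipulates. Your double count is self-contained but needs the detour through $\frac{n!}{m!}\binom{n-1}{m-1}$. In exchange, it explains what the sum counts: the Lah number.

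One small point: in Lemma~\ref{lemma:stirling-both} the fact is invoked with $n=k-1$ and $m=j$, which includes $m=n+1$ when $j=k$. That case falls outside your range $n\geqslant m$. There the left side is zero, since $c(n,k)S(k,m)\neq 0$ forces $m\leqslant k\leqslant n$. For $n\geqslant 1$ the right side is also zero because of the factor $\binom{n}{m}=0$. So a one-line remark covers it.
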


The next lemma will play a key roll in our proof of Theorem \ref{thm:girth}.

\begin{lemma}
\label{lemma:stirling-both}
    Let $1 \leqslant j \leqslant k$ be positive integers. Then
    \begin{equation*}
    \label{equ:stirling}
        j \sum_{i=j}^k c(k,i) S(i,j) = k \sum_{i=j}^k c(k-1,i-1)S(i,j)
    \end{equation*}
\end{lemma}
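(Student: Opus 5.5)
The plan is to read both sides as change-of-basis coefficients between the rising-factorial, power, and falling-factorial bases of $\Q[x]$, identify them with Lah numbers, and finish with a binomial identity. Write $x^{\underline{j}} = x(x-1)\cdots(x-j+1)$. By Fact~\ref{fact:stirling-rising-factorial}, read as the polynomial identity $x^{\overline{k}} = \sum_i c(k,i)x^i$, together with the defining expansion $x^i = \sum_j S(i,j)\, x^{\underline{j}}$ of the Stirling numbers of the second kind, we get
\[
  x^{\overline{k}} = \sum_{j} \Big(\sum_{i=j}^k c(k,i)S(i,j)\Big) x^{\underline{j}} .
\]
So the inner sum on the left-hand side of the lemma is the coefficient $L(k,j)$ of $x^{\underline{j}}$ in $x^{\overline{k}}$, i.e.\ the (unsigned) Lah number. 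Similarly, $\sum_i c(k-1,i-1)x^i = x\cdot x^{\overline{k-1}}$. Hence the sum on the right-hand side is the coefficient of $x^{\underline{j}}$ in $x\cdot x^{\overline{k-1}}$.

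Next I compute that coefficient. Expand $x^{\overline{k-1}} = \sum_{j'} L(k-1,j')\,x^{\underline{j'}}$ and use $x\cdot x^{\underline{j'}} = x^{\underline{j'+1}} + j'\,x^{\underline{j'}}$. This gives
\[
  \sum_{i=j}^k c(k-1,i-1)S(i,j) = L(k-1,j-1) + j\,L(k-1,j).
\]
The lemma is therefore equivalent to the identity
\[
  j\,L(k,j) = k\big(L(k-1,j-1) + j\,L(k-1,j)\big).
\]

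For this I will use the closed form $L(k,j) = \binom{k-1}{j-1}\frac{k!}{j!}$, which is the content of Fact~\ref{fact:stirling-both} once its indices are read correctly. If a self-contained proof is wanted, I would derive it from $x^{\overline{k}} = (-1)^k(-x)^{\underline{k}}$ and a Vandermonde-type expansion, or by counting ordered partitions of $[k]$ into $j$ nonempty linearly ordered blocks. Substituting the closed form, the left side becomes $\binom{k-1}{j-1}\frac{k!}{(j-1)!}$. The right side becomes
\[
  \frac{k!}{(j-1)!}\left[\binom{k-2}{j-2} + \binom{k-2}{j-1}\right],
\]
which equals the left side by Pascal's rule.

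The edge cases need a check. When $j = 1$ we have $L(k-1,0) = 0$ for $k \geqslant 2$. When $k = j$ the terms with $L(k-1,j)$ vanish. When $k = j = 1$ both sides equal $1$. The main obstacle I expect is only bookkeeping: the correct form of the Lah closed formula, since Fact~\ref{fact:stirling-both} as printed has garbled indices. If needed, I will re-derive it as above rather than rely on the stated version.
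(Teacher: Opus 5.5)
Your argument is correct. Its outer structure matches the paper's: both identify $\sum_i c(k,i)S(i,j)$ with the Lah number $L(k,j)=\binom{k}{j}(k-1)^{\underline{k-j}}=\binom{k-1}{j-1}\frac{k!}{j!}$ via Fact~\ref{fact:stirling-both}, and both close with Pascal's rule.

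The real difference is how you handle the shifted index in $\sum_i c(k-1,i-1)S(i,j)$.

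\begin{itemize}
\item The paper works on the first-kind side. It uses Fact~\ref{fact:stirling-recurrence} in the form $c(k-1,i-1)=c(k,i)-(k-1)c(k-1,i)$, which rewrites the sum as $L(k,j)-(k-1)L(k-1,j)$. It then finishes with $\binom{k}{j}-\binom{k-1}{j}=\binom{k-1}{j-1}$ and $j\binom{k}{j}=k\binom{k-1}{j-1}$.
\item You work on the second-kind side. Multiplying by $x$ in the falling-factorial basis amounts to the recurrence $S(i+1,j)=S(i,j-1)+jS(i,j)$, which gives $L(k-1,j-1)+jL(k-1,j)$ instead.
\end{itemize}

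The two expressions agree by the Lah recurrence $L(k,j)=L(k-1,j-1)+(k+j-1)L(k-1,j)$.

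Your route needs one standard fact the paper never states, namely $x^i=\sum_j S(i,j)\,x^{\underline{j}}$. In return it explains conceptually why Lah numbers appear. It also makes the boundary cases cleaner. At $j=k$, the paper's computation produces the term $\binom{k-1}{k}(k-2)^{\underline{-1}}$, which is harmless only because the binomial vanishes. You need only $\binom{n}{-1}=0$, and you check $k=j=1$ directly.
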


\begin{proof}
    When $i < j$, the Stirling number of the second kind $S(i,j) = 0$ by definition. Thus we can rewrite both sums with $i$ ranging from $1$ to $k$. We first consider the sum on the right hand side
    \[
        R = k \sum_{i=1}^k c(k-1, i-1) S(i,j).
    \]
    Using the recurrence relation in Fact \ref{fact:stirling-recurrence}, we can rewrite this as
    \[
    R = k \sum_{i=1}^k c(k,i) S(i,j) - (k-1) \sum_{i=1}^k c(k-1,i) S(i,j).
    \]
    Now from the identity in Fact \ref{fact:stirling-both}, these sums are precisely
    \[
        R = k {k \choose j} (k-1)^{\underline{k-j}} - k(k-1) {k-1 \choose j} (k-2)^{\underline{k-j-1}}.
    \]
    Grouping the falling factorials together, this becomes
    \[
        R = k \left[{k \choose j} - {k-1 \choose j} \right] (k-1)^{\underline{k-j}} = k {k-1 \choose j-1} (k-1)^{\underline{k-j}}.
    \]
    On the other hand, applying Fact \ref{fact:stirling-both} to the left hand side directly gives us
    \[
        L = j {k \choose j} (k-1)^{\underline{k-j}}.
    \]
    So we conclude that $L = R$ because $j {k \choose j} = k {k-1 \choose j-1}$.
\end{proof}

Now we state some results related the ground set $E$ of a matroid $M$ to the number of sets of a given size and rank.

\begin{definition}
\label{def:matroid-sets}
    Let $M$ be a matroid over ground set $E$, and suppose that $r$ and $s$ are positive integers. We denote $\calD^r_s(M)$ to be the collection of subsets of $E$ of rank $r$ and size $s$, that is
    \[
        \calD^r_s(M) = \{S \subseteq E : |S| = s \text{ and } \rk_M(S) = r\}.
    \]
    Note that for some values of $r$ and $s$, we may have $\calD^r_s(M) = \varnothing$.
\end{definition}

\begin{lemma}
\label{lemma:sum-Ds}
    Let $M$ be a loopless matroid over ground set $E$. Then for any positive integer $1 \leqslant s \leqslant |E|$ we have
    \[
        {|E| \choose s} = \sum_{r=1}^s |\calD^r_s(M)|.
    \]
\end{lemma}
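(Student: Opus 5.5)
The statement is Lemma~\ref{lemma:sum-Ds}. The plan is to partition the $s$-element subsets of $E$ according to their rank. There are $\binom{|E|}{s}$ subsets $S\subseteq E$ with $|S|=s$, and each has a well-defined rank $\rk_M(S)$, so the families $\calD^r_s(M)$, for $r$ ranging over all nonnegative integers, are pairwise disjoint. Their union is exactly the set of $s$-element subsets. Hence
\[
    {|E| \choose s} = \sum_{r \geqslant 0} |\calD^r_s(M)|,
\]
and it remains to show that only the indices $1 \leqslant r \leqslant s$ contribute.

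For the upper bound, the rank axioms give $\rk_M(S) \leqslant |S| = s$, so $\calD^r_s(M) = \varnothing$ whenever $r > s$. For the lower bound, we use the hypothesis that $M$ is loopless together with $s \geqslant 1$. Any $S$ with $|S| = s \geqslant 1$ contains some element $x$. Since $x$ is not a loop, $\rk_M(\{x\}) = 1$, and monotonicity of the rank function gives $\rk_M(S) \geqslant 1$. Thus $\calD^0_s(M) = \varnothing$.

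Removing these empty families from the sum leaves exactly $\sum_{r=1}^s |\calD^r_s(M)|$, which is the claimed identity. There is no real obstacle here; the only point needing care is the role of the loopless hypothesis. Without it, a set consisting entirely of loops would have rank $0$, and the identity would fail. That hypothesis is exactly what excludes the $r=0$ term.
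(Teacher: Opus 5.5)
Your proof is correct and follows the same approach as the paper: partition the $s$-element subsets of $E$ by rank, using the rank bound $\rk_M(S) \leqslant |S|$ to exclude $r > s$ and looplessness to exclude $r = 0$. You simply spell out each bound, including the monotonicity step from a non-loop element $x \in S$, which the paper's one-line proof leaves implicit.
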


\begin{proof}
    Every subset $S \subset E$ where $|S| = s$ must have rank $1 \leqslant \rk(S) \leqslant s$ as $M$ is loopless. Thus the collection of sets $\{\calD^r_s(M)\}$ for $1 \leqslant r \leqslant s$ partitions the ${|E| \choose s}$ cardinality $s$ subsets of $E$.
\end{proof}

\begin{lemma}
\label{lemma:power-E}
    Let $M$ be a loopless matroid over ground set $E$ and $k$ be a positive integer. Then
    \[
        |E|^k = \sum_{j=1}^k j! S(k,j) \sum_{i=1}^j |\calD^i_j(M)|.
    \]
\end{lemma}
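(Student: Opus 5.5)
We will prove the identity by a double-counting argument on functions, and then use Lemma~\ref{lemma:sum-Ds} to rewrite the binomial coefficients that appear. The quantity $|E|^k$ counts all functions $f\colon [k] \to E$, so the plan is to sort these functions by their image.

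First we group the functions $f\colon[k]\to E$ according to the size $j=|f([k])|$ of their image. Since $k\geqslant 1$, we have $1\leqslant j\leqslant k$. A function with image exactly a fixed $j$-subset $T\subseteq E$ is the same thing as a surjection $[k]\to T$. Such a surjection is determined by two choices:
\begin{itemize}
    \item a partition of $[k]$ into $j$ nonempty blocks (the fibers), of which there are $S(k,j)$;
    \item a bijection from the set of blocks to $T$, of which there are $j!$.
\end{itemize}
Hence there are $j!\,S(k,j)$ surjections onto $T$. Summing over the $\binom{|E|}{j}$ choices of $T$ gives
\[
    |E|^k = \sum_{j=1}^k j!\,S(k,j)\binom{|E|}{j}.
\]

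Next we replace each $\binom{|E|}{j}$ by $\sum_{i=1}^j |\calD^i_j(M)|$. For $1\leqslant j\leqslant |E|$ this is exactly Lemma~\ref{lemma:sum-Ds}, which applies because $M$ is loopless. The only point needing care is the range $j>|E|$, which can occur when $k>|E|$. In that range $\binom{|E|}{j}=0$, and every $\calD^i_j(M)$ is empty because $E$ has no subsets of size $j$. So the substitution remains valid term by term, and we obtain
\[
    |E|^k = \sum_{j=1}^k j!\,S(k,j)\sum_{i=1}^j |\calD^i_j(M)|.
\]

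There is no real obstacle here. The only steps requiring attention are the boundary case $j>|E|$ and the use of looplessness, which guarantees that every nonempty subset has rank between $1$ and its size, so the inner sum correctly starts at $i=1$.
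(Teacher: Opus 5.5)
Your proof is correct and follows the paper's argument: both count functions $[k]\to E$ (equivalently $k$-tuples) by their image, obtain $|E|^k=\sum_{j=1}^k j!\,S(k,j)\binom{|E|}{j}$, and then expand each binomial coefficient using Lemma~\ref{lemma:sum-Ds}. You also treat the range $j>|E|$ explicitly, where Lemma~\ref{lemma:sum-Ds} as stated does not apply but both sides vanish; the paper leaves this point implicit.
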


\begin{proof}
    Consider the mapping $\varphi : E^k \to 2^E$ defined as $(e_1, \dotsc, e_k) \mapsto \{e_1, \dotsc, e_k\}$ which sends a $k$-tuple to the subset of corresponding elements in $E$. The image of $\varphi$ is the powerset $2^E \setminus \varnothing$. We observe that, for any $S \subset E$ of size $|S| = j$, the preimage $\varphi^{-1}(S)$ corresponds exactly to the set of all possible surjections $[k] \to [j]$. Each such surjection induces a partition of $[k]$ into $j$ equivalence classes, of which there are $S(k,j)$ partitions and $j!$ ways to assign the equivalence classes to distinct elements of $[j]$. Therefore we can write $E^k$ as the disjoint union of all such preimages, ranging over all $1 \leqslant j \leqslant k$ and all subsets $S \subset E$ of size $|S| = j$. This gives us the sum
    \[
        |E|^k = \sum_{j=1}^k (j!)\: S(k,j) {|E| \choose j}.
    \]
    The result then follows from applying Lemma \ref{lemma:sum-Ds} to expand each binomial coefficient ${|E| \choose j}$ as desired.
\end{proof}

Now we recall some relevant definitions and state some results from \cite{jensen2021motivic} which are relevant to our work.

\begin{definition}
    Let $n \geqslant 0$ be an integer. We define the \textbf{$q$-analogue} to be the polynomial
    \[
        [n]_q = \frac{q^n - 1}{q-1} = 1 + q + \dotsc + q^{n-1} \in \Z[q].
    \]
    Note that if $n = 0$ then $[n]_q = 0$, and if $q = 1$ then $[n]_q = n$.
\end{definition}

\begin{definition}
    Let $M$ be a loopless matroid over ground set $E$. We define the \textbf{reduced lattice} $\hat{\calL}(M) = \calL(M) \setminus \{\varnothing, E\}$. We will write $\hat{\calL}$ when the context is clear.
\end{definition}

\begin{proposition}[\cite{jensen2021motivic}, Proposition 2.2]
\label{prop:two-flats}
    For any two flats $F_1 \subseteq F_2$ in $M$,
    \[
        [\rk(M) - \rk(F)]_q = \sum_{F_1 \subseteq F \subsetneq F_2} \overline{\chi}_{M|F_2/F}(q).
    \]
\end{proposition}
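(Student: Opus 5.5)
The statement should be read with $\rk(M)-\rk(F)$ replaced by $\rk(F_2)-\rk(F_1)$, which is what the right-hand side depends on. The plan is to reduce to the case $F_1=\varnothing$, $F_2=E$ of a loopless matroid and then evaluate the sum by M\"obius inversion.

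\emph{Step 1: reduction.} Set $N = M|F_2/F_1$, a matroid on $F_2\setminus F_1$ of rank $r=\rk(F_2)-\rk(F_1)$. It is loopless because $F_1$ is a flat. The map $F\mapsto F\setminus F_1$ is a lattice isomorphism from the interval $[F_1,F_2]$ of $\calL(M)$ onto $\calL(N)$. Under this map, $M|F_2/F$ corresponds to $N/(F\setminus F_1)$, and proper flats correspond to proper flats. So it suffices to prove
\[
[r]_q=\sum_{\substack{F\in\calL(N)\\ F\neq E(N)}}\overline{\chi}_{N/F}(q)
\]
for every loopless matroid $N$ of rank $r$.

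\emph{Step 2: unreduced identity.} Since $N$ is loopless, the flat-based formula for the characteristic polynomial gives
\[
\chi_{N/F}(q)=\sum_{G\supseteq F}\mu(F,G)\,q^{\,r-\rk(G)}.
\]
Summing over all flats $F$ and exchanging the order of summation, we get
\[
\sum_{F}\chi_{N/F}(q)=\sum_{G} q^{\,r-\rk(G)}\sum_{F\leqslant G}\mu(F,G).
\]
The inner sum is $\delta_{G,\varnothing}$ by the defining property of the M\"obius function. Hence the whole sum equals $q^{r}$. The term $F=E(N)$ contributes $\chi_{U_{0,0}}=1$, so
\[
\sum_{F\neq E(N)}\chi_{N/F}(q)=q^r-1.
\]

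\emph{Step 3: reduce.} For every proper flat $F$, the contraction $N/F$ is loopless of positive rank. Therefore $(q-1)\mid\chi_{N/F}(q)$ and $\overline{\chi}_{N/F}=\chi_{N/F}/(q-1)$. Dividing the identity from Step 2 by $q-1$ gives $[r]_q$, as claimed.

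I do not expect a real obstacle. The only points requiring care are the bookkeeping in the Step 1 isomorphism (checking that contraction and restriction commute appropriately on the interval) and the degenerate case $F_1=F_2$. In that case both sides are $0$: the sum is empty and $[0]_q=0$.
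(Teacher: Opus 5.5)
Your argument is correct, and your reading of the left-hand side as $[\rk(F_2)-\rk(F_1)]_q$ is the intended one. It is exactly how the paper applies the result with $F_2=E$ in Corollary~\ref{cor:flat-diff} and in the proof of Theorem~\ref{thm:upsilon-recurrence}. The paper gives no proof of this proposition; it is quoted from \cite{jensen2021motivic}. So there is no internal argument to compare against, and your proposal supplies a self-contained one.

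The individual steps hold up. The reduction to $N=M|F_2/F_1$ through the interval isomorphism $[F_1,F_2]\cong\calL(N)$ is sound, and it also handles any loops of $M$, since they all lie in $F_1$. The key input is the identity $\sum_{F\in\calL(N)}\chi_{N/F}(q)=q^{\rk(N)}$. Dividing the proper-flat terms by $q-1$ is legitimate because each such $N/F$ is loopless of positive rank.

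One point of precision: in Step 2 you use $\sum_{F\leqslant G}\mu(F,G)=\delta_{G,\varnothing}$. This sums $\mu$ over its \emph{first} argument, whereas the usual recursive definition of $\mu$ sums over the second. The identity does hold, because $\mu$ is a two-sided inverse of the zeta function in the incidence algebra of $\calL(N)$. Since it is a consequence of the definition rather than the definition itself, you should say so explicitly.
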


\begin{corollary}
\label{cor:flat-diff}
    Let $M$ be a matroid over ground set $E$ with a flat $F$. Then
    \[
        [\rk(M) - \rk(F)]_q = \sum_{F \subseteq F' \in \hat{\calL}} \overline{\chi}_{M/F'}(q).
    \]
\end{corollary}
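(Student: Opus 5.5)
The plan is to obtain this as the special case $F_1 = F$, $F_2 = E$ of Proposition~\ref{prop:two-flats}. The statement of that proposition contains an apparent typo: its left-hand side reads $[\rk(M) - \rk(F)]_q$, where, in the form proved in \cite{jensen2021motivic}, it should be $[\rk(F_2) - \rk(F_1)]_q$. With $F_2 = E$ this is $[\rk(M) - \rk(F)]_q$, which is exactly the left-hand side we want. So no new computation should be needed; the work is in matching the two index sets.

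First, since $M|E = M$, each summand satisfies $\overline{\chi}_{M|E/F'}(q) = \overline{\chi}_{M/F'}(q)$. Second, Proposition~\ref{prop:two-flats} sums over flats $F'$ with $F \subseteq F' \subsetneq E$. The corollary instead sums over $F' \in \hat{\calL}$ with $F \subseteq F'$. Because $\hat{\calL} = \calL(M) \setminus \{\varnothing, E\}$, the condition $F' \neq E$ appears on both sides. The only remaining difference is the flat $F' = \varnothing$.

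If $F \neq \varnothing$, then every flat $F' \supseteq F$ is nonempty, so the two index sets coincide and the identity follows immediately. This is the intended reading: $F$ is a flat of the reduced lattice, i.e.\ a nonempty proper flat. The case $F = E$ is also consistent, since both sides are $0$ because $[0]_q = 0$ and the sum is empty.

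The only real subtlety, and the step I would check most carefully, is $F = \varnothing$. In that case Proposition~\ref{prop:two-flats} produces an extra term $\overline{\chi}_{M}(q)$ for $F' = \varnothing$, which is not indexed by $\hat{\calL}$. My plan there is to state the hypothesis explicitly, namely that $F \in \hat{\calL}$ (or at least $F \neq \varnothing$). For $F = \varnothing$, I would record the identity in the form
\[
[\rk(M)]_q = \overline{\chi}_M(q) + \sum_{F' \in \hat{\calL}} \overline{\chi}_{M/F'}(q).
\]
I expect this boundary-case bookkeeping, rather than any genuine computation, to be the main point requiring care when the corollary is later applied, for instance when evaluating at $q = 1$ to get $\rk(M) - \rk(F) = \sum \overline{\chi}_{M/F'}(1)$.
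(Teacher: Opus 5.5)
Your proposal is correct and is exactly the paper's argument: specialize Proposition~\ref{prop:two-flats} to $F_1 = F$, $F_2 = E$, reading its left side as $[\rk(F_2)-\rk(F_1)]_q$ as you note. Your extra bookkeeping is also right: for $F = \varnothing$, indexing over $\hat{\calL}$ drops the term $\overline{\chi}_M(q)$, so the corollary genuinely needs $F \neq \varnothing$ (e.g.\ for $U_{2,2}$ the left side is $1+q$ while the right side is $2$). This is harmless for the paper's only use of the corollary, in Lemma~\ref{lemma:flat-sum}, where the relevant flat is the closure of a set $A$ of rank $i \geq 1$.
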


\begin{proof}
    Let $F_1 = F$ and $F_2 = E$ in Proposition \ref{prop:two-flats}.
\end{proof}

Now we prove two lemmas which compute sums we will encounter in our proof of Theorem \ref{thm:girth}.

\begin{lemma}
\label{lemma:flat-sum}
    Let $M$ be a loopless matroid over ground set $E$ and suppose that $i, j \geqslant 1$ are integers. Then
    \[
        \sum_{F \in \hat{\calL}} \overline{\chi}_{M/F}(q) |\calD^i_j(M |F) | = |\calD^i_j(M)| \cdot [\rk(M) - i]_q.
    \]
\end{lemma}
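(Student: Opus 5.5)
We prove the identity by double counting. Expand the left-hand side as a sum over pairs $(S, F)$ with $S \in \calD^i_j(M|F)$, swap the order of summation, and collapse the inner sum over flats with Corollary~\ref{cor:flat-diff}. The key observation is that a subset $S \subseteq F$ has the same rank in $M|F$ as in $M$, so
\[
    \calD^i_j(M|F) = \{S \in \calD^i_j(M) : S \subseteq F\}.
\]
Therefore
\[
    \sum_{F \in \hat{\calL}} \overline{\chi}_{M/F}(q)\, |\calD^i_j(M|F)| = \sum_{S \in \calD^i_j(M)} \; \sum_{\substack{F \in \hat{\calL} \\ S \subseteq F}} \overline{\chi}_{M/F}(q).
\]

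Next we rewrite the inner sum. Since $F$ is a flat, $S \subseteq F$ holds exactly when $\cl_M(S) \subseteq F$. Set $G = \cl_M(S)$, a flat of rank $\rk_M(S) = i$. The inner sum then runs over flats $F$ with $G \subseteq F$ and $F \in \hat{\calL}$, i.e.\ $F \neq \varnothing, E$. Because $M$ is loopless and $j, i \geqslant 1$, $S$ is nonempty, so $G \neq \varnothing$ and the exclusion of $\varnothing$ is automatic.

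It remains to match this with Corollary~\ref{cor:flat-diff} applied to $G$, which gives $[\rk(M) - i]_q = \sum_{G \subseteq F' \in \hat{\calL}} \overline{\chi}_{M/F'}(q)$. Here we must be careful with the range. Proposition~\ref{prop:two-flats} with $F_1 = G$, $F_2 = E$ sums over $G \subseteq F \subsetneq E$, and this includes $F = G$ and excludes $E$, which is exactly our inner range. The remaining worry is the boundary case $G = E$, i.e.\ $i = \rk(M)$. Then the inner sum is empty, hence $0$, and $[0]_q = 0$, so the two sides agree.

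Every $S \in \calD^i_j(M)$ therefore contributes the same value $[\rk(M) - i]_q$, independent of $S$. Summing over $S$ gives $|\calD^i_j(M)| \cdot [\rk(M) - i]_q$, as claimed.

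The main obstacle is only bookkeeping: checking that the flat range in Corollary~\ref{cor:flat-diff}, as derived from Proposition~\ref{prop:two-flats}, matches "$F \in \hat{\calL}$, $F \supseteq \cl(S)$" exactly, including the edge cases $F = \cl(S)$ and $\cl(S) = E$.
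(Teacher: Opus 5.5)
Your argument is correct and is the same as the paper's: you expand the left side as a sum over pairs $(S,F)$ with $S \in \calD^i_j(M)$ and $S \subseteq F$, swap the order of summation, and collapse the inner sum via Corollary~\ref{cor:flat-diff} applied to $\cl_M(S)$. The only difference is cosmetic. The paper handles $i > \rk(M)$ and $i = \rk(M)$ as separate cases up front, whereas you absorb them into the per-$S$ computation (an empty $\calD^i_j(M)$, or $\cl_M(S) = E$ giving an empty inner sum and $[0]_q = 0$), and your check that $\cl_M(S) \neq \varnothing$ correctly matches the corollary's range.
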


\begin{proof}
    When $i > \rk(M)$, the statement is immediate since both sides are $0$, as each $|\calD^i_j| = 0$. When $i = \rk(M)$, the left hand side is $0$ vacuously because the sum is empty, and the right hand side is $0$ because $[\rk(M)-\rk(M)]_q = [0]_q = 0$. Therefore, we can assume that $i < \rk(M)$. Expanding the sum, we obtain
    \begin{align*}
        \sum_{F \in \hat{\calL}} \overline{\chi}_{M/F}(q) |\calD^i_j(M |F)| &= \sum_{F \in \hat{\calL}} \sum_{A \in \calD^i_j(M|F)} \overline{\chi}_{M/F}(q) \\
        &= \sum_{A \in \calD^i_j(M)} \sum_{\substack{F \in \hat{\calL} \\ A \subseteq F}} \overline{\chi}_{M/F}(q) \\
        &= \sum_{A \in \calD^i_j(M)} [\rk(M) - i]_q \\
        &= |\calD^i_j(M)| \cdot [\rk(M)-i]_q.
    \end{align*}
    Here the second to last equality follows from Corollary \ref{cor:flat-diff}, and the rest follow from reindexing the sum.
\end{proof}

\begin{lemma}
\label{prop:flat-sum}
    Let $M$ be a loopless matroid and $k \geqslant 1$ be an integer. Then
    \[
        \sum_{F \in \hat{\calL}} \overline{\chi}_{M/F}(q) |F|^k = \sum_{j=1}^k j! S(k,j) \sum_{i=1}^j |\calD^i_j(M)| \cdot [\rk(M)-i]_q.
    \]
\end{lemma}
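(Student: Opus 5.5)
The plan is to apply Lemma~\ref{lemma:power-E} to each restriction $M|F$ and then collapse the sum over flats using Lemma~\ref{lemma:flat-sum}. Since $M$ is loopless, every flat $F \in \hat{\calL}$ gives a loopless restriction $M|F$ on ground set $F$. Lemma~\ref{lemma:power-E} applied to $M|F$ therefore yields
\[
    |F|^k = \sum_{j=1}^k j!\, S(k,j) \sum_{i=1}^j |\calD^i_j(M|F)|.
\]

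Next, multiply this identity by $\overline{\chi}_{M/F}(q)$ and sum over all $F \in \hat{\calL}$. Because all sums are finite, we may interchange the order and bring the sum over $F$ inside the sums over $j$ and $i$. For each fixed pair $(i,j)$ with $1 \leqslant i \leqslant j \leqslant k$, the inner expression is
\[
    \sum_{F \in \hat{\calL}} \overline{\chi}_{M/F}(q)\, |\calD^i_j(M|F)|,
\]
and Lemma~\ref{lemma:flat-sum} evaluates this directly as $|\calD^i_j(M)| \cdot [\rk(M)-i]_q$. Substituting gives exactly the right-hand side of the claimed identity.

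The one step needing care is the degenerate range of $j$. When $j > |F|$ the set $\calD^i_j(M|F)$ is empty, and when $j > |E|$ the set $\calD^i_j(M)$ is empty, so these terms contribute $0$ on both sides. This is consistent with Lemma~\ref{lemma:power-E}, whose proof uses $\binom{|F|}{j} = 0$ for $j > |F|$. Lemma~\ref{lemma:sum-Ds} is stated only for $s \leqslant |E|$, but its conclusion $0 = 0$ holds trivially beyond that range, so both lemmas may be applied for all $1 \leqslant j \leqslant k$.

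The remaining steps are bookkeeping. I expect no real obstacle here, since Lemma~\ref{lemma:flat-sum} already carries the combinatorial content (via Corollary~\ref{cor:flat-diff}).
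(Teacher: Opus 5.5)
Your proposal is correct and follows the paper's proof: expand each $|F|^k$ with Lemma~\ref{lemma:power-E} applied to the loopless restriction $M|F$, interchange the finite sums, and evaluate each inner sum over $F$ with Lemma~\ref{lemma:flat-sum}. Your remark that Lemmas~\ref{lemma:sum-Ds} and~\ref{lemma:power-E} still hold trivially (as $0=0$) when $j > |F|$ is a small point the paper leaves implicit.
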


\begin{proof}
    Applying the result of Lemma \ref{lemma:power-E} to each of the flats $F \in \hat{\calL}$, we see that
    \[
        \sum_{F \in \hat{\calL}} \overline{\chi}_{M/F}(q) |F|^k = \sum_{F \in \hat{\calL}} \overline{\chi}_{M/F}(q) \sum_{j=1}^k j! S(k,j) \sum_{i=1}^j |\calD^i_j(M | F)|.
    \]
    Exchanging the order of summation of $F$ with the sums in $j$ and $i$, we can apply the result of Lemma \ref{lemma:flat-sum},
    \begin{align*}
        \sum_{F \in \hat{\calL}} \overline{\chi}_{M/F}(q) |F|^k &= \sum_{j=1}^k j! S(k,j) \sum_{i=1}^j \sum_{F \in \hat{\calL}} \overline{\chi}_{M/F}(q) |\calD^i_j(M | F)| \\
        &= \sum_{j=1}^k j! S(k,j) \sum_{i=1}^j |\calD^i_j(M)| [\rk(M) - i]_q. \qedhere
    \end{align*}
\end{proof}

Our final lemma expresses the $k$th derivative of the topological zeta function, and will be the basis of our proof of Theorem \ref{thm:girth}.

\begin{lemma}
\label{prop:k-derivative}
    Let $M$ be a loopless matroid with ground set $E$. Then
    \[
        \frac{d^k}{ds^k} Z^{\top}_M(s) = \frac{1}{|E|s+\rk(M)} \left(-k|E| \frac{d^{k-1}}{ds^{k-1}} Z^{\top}_M(s) + \sum_{F \in \hat{\calL}} \overline{\chi}_{M/F}(1) \frac{d^k}{ds^k} Z^{\top}_{M|F}(s) \right).
    \]
\end{lemma}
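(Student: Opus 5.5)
Start from the recurrence of Proposition~\ref{prop:zeta-recurrence}. Clear the denominator and write it as
\[
    (|E|s + \rk(M))\, Z^{\top}_M(s) = \sum_{\substack{F \in \calL(M) \\ F \neq E}} \overline{\chi}_{M/F}(1)\, Z^{\top}_{M|F}(s).
\]
Since $M$ is loopless, the proper flats are $\varnothing$ together with the elements of $\hat{\calL}$. The $F=\varnothing$ term contributes $\overline{\chi}_{M}(1)\, Z^{\top}_{M|\varnothing}(s) = \overline{\chi}_M(1)$, using $Z^{\top}_{U_{0,0}}(s)=1$. This term is a constant in $s$.

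Next, differentiate both sides $k$ times with respect to $s$, assuming $k \geqslant 1$. On the left, the factor $|E|s+\rk(M)$ is linear in $s$, so the Leibniz rule leaves only two terms:
\[
    (|E|s+\rk(M))\frac{d^k}{ds^k}Z^{\top}_M(s) + k|E|\frac{d^{k-1}}{ds^{k-1}}Z^{\top}_M(s).
\]
On the right, the constant $\varnothing$-term is killed by differentiation. What remains is $\sum_{F\in\hat{\calL}} \overline{\chi}_{M/F}(1)\frac{d^k}{ds^k}Z^{\top}_{M|F}(s)$, since the coefficients $\overline{\chi}_{M/F}(1)$ do not depend on $s$.

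Finally, move the $k|E|$ term to the right-hand side and divide by $|E|s+\rk(M)$. This is legitimate as an identity in $\Q(s)$, because $|E|s+\rk(M)\neq 0$ as a rational function. The result is exactly the stated formula.

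There is no real obstacle here. The only points needing care are dropping the $F=\varnothing$ term, which requires $k\geqslant 1$, and observing that the case $k=0$ is not intended, or else is read with the $\varnothing$-term reinstated. I would state explicitly that $k\geqslant 1$ throughout.
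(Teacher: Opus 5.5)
Your proof is correct and is essentially the paper's argument. The paper differentiates the recurrence of Proposition~\ref{prop:zeta-recurrence} using the quotient rule and induction on $k$; clearing the denominator first, so that Leibniz's rule acts on the linear factor $|E|s+\rk(M)$, just collapses that induction into one step. Your observation that the constant $F=\varnothing$ term drops out only when $k\geqslant 1$, so the formula summed over $\hat{\calL}$ needs $k\geqslant 1$, is a detail the paper leaves implicit; it matches the lemma's only use, for $0<k<g$.
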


\begin{proof}
    The result follows from induction and the quotient rule applied to the recurrence formula for the topological zeta function in Proposition~\ref{prop:zeta-recurrence}. 
\end{proof}

Now we are finally ready to prove Theorem \ref{thm:girth}.

\begin{proof}[Proof of Theorem \ref{thm:girth}]
    We fix $g$ and proceed by induction on $|E|$. The base case is a uniform matroid, which is handled by Proposition~\ref{prop:zeta-uniform}. For the inductive step, assume that we have a matroid $M$ of girth $\gir(M)\geqslant g$ over a ground set $E$, and that all matroids with smaller cardinality ground sets have the desired first $g-1$ Taylor coefficients.


    Within this inductive step, we also use induction on $k$, the order of the derivative. As $Z^{\top}_M(0) = 1$ for all matroids, the base case $k = 0$ holds. Now for any $0 < k < g$, assume towards induction that the first $(k-1)$ Taylor coefficients are as desired. By Lemma \ref{prop:k-derivative}, the $k$-th derivative evaluated at $s = 0$ is
    \begin{equation*}
        \label{ds:eq:kth-der}
            \left(\frac{d^k}{ds^k} Z^{\top}_M(s) \right) \Bigg|_{s=0} = \frac{1}{\rk(M)} \Bigg[\overbrace{-k|E| \left(\frac{d^{k-1}}{ds^{k-1}} Z^{\top}_M(s)\right) \Bigg|_{s=0}}^{A}+ \overbrace{\sum_{F \in \hat{\calL}} \overline{\chi}_{M / F}(1) \left(\frac{d^k}{ds^k} Z^{\top}_{M |F}(s) \right) \Bigg|_{s=0}}^{B} \Bigg].
    \end{equation*}
    We will analyze the two major terms in this expression, which we label $A$ and $B$. First, by our inductive hypothesis (on $k$) and Fact \ref{fact:stirling-rising-factorial}, we see
    \[
        A = (-1)^k k |E| \cdot |E|^{\overline{k-1}} = (-1)^k k \sum_{i=1}^{k-1} c(k-1, i) |E|^{i+1}.
    \]
    Note that because $c(k-1, 0) = 0$, we can reindex our sum to the more natural expression
    \[
        A = (-1)^k k \sum_{i=1}^k c(k-1, i-1) |E|^i.
    \]
    Now we can apply Lemma \ref{lemma:power-E} to rewrite $|E|^i$ for each $i$. In particular, because of the assumption on $\gir(M)$, we know that $|\calD^i_j(M)| = 0$ unless $i = j$. This gives the expression
    \[
        A = (-1)^k k \sum_{i=1}^k c(k-1,i-1) \sum_{j=1}^i j! S(i,j) |\calD^j_j(M)|.
    \]
    Our last step here will be to exchange the order of summation of $i$ and $j$. This will allow us to express $A$ in a form convenient for the use of Lemma \ref{lemma:stirling-both} later on:
    \[
        A = (-1)^k \sum_{j=1}^k j! |\calD^j_j(M)| \cdot k \sum_{i=j}^k c(k-1,i-1) S(i,j).
    \]
    Putting this aside for now, we turn to the second term, $B$. For each $F \in \hat{\calL}$, we can apply the induction hypothesis (on $|E|$) to $M|F$. Then applying Fact \ref{fact:stirling-rising-factorial}
    \[
        B = (-1)^k \sum_{F \in \hat{\calL}} \overline{\chi}_{M/F}(1) |F|^{\overline{k}} = (-1)^k \sum_{F \in \hat{\calL}} \overline{\chi}_{M/F}(1) \sum_{i=1}^k c(k,i) |F|^i.
    \]
    After exchanging the order of summation of $i$ and $F$, we can apply Lemma \ref{prop:flat-sum} to obtain
    \[
        B = (-1)^k \sum_{i=1}^k c(k,i) \sum_{j=1}^i j! S(i,j) |\calD^j_j(M)| \cdot (\rk(M) - j).
    \]
    We will group this into two terms, one containing $\rk(M)$ and one containing $j$ from the rightmost term in the sum. That is, we write $B = C - D$, where
    \[
        C = (-1)^k \rk(M) \sum_{i=1}^k c(k,i) \sum_{j=1}^i j! S(i,j) |\calD^j_j(M)|,
    \]
    and
    \[
        D = (-1)^k \sum_{i=1}^k c(k,i) \sum_{j=1}^i j! \cdot j S(i,j) |\calD^j_j(M)|.
    \]
    By the reverse application of Lemma \ref{lemma:power-E} and Fact \ref{fact:stirling-rising-factorial}, we see that
    \[
        C = (-1)^k \rk(M) |E|^{\overline{k}}.
    \]
    We claim that $D = A$, which is sufficient to prove the result since then
    \[
        \left( \frac{d^k}{ds^k} Z^{\top}_M(s) \right) \bigg|_{s=0} = \frac{A + C - D}{\rk(M)} = \frac{C}{\rk(M)} = (-1)^k |E|^{\overline{k}}.
    \]
    By switching the order of summation of $i$ and $j$, we can rewrite $D$, again with the intention of applying Lemma \ref{lemma:stirling-both}.
    \[
        D = (-1)^k \sum_{j=1}^k j! |\calD^j_j(M)| \cdot j \sum_{i=j}^k c(k,i) S(i,j).
    \]
    Now by comparing the expressions for $A$ and $D$, we see that it is sufficient to show that for any $1 \leqslant j \leqslant k$
    \[
        j \sum_{i=j}^k c(k,i) S(i,j) = k \sum_{i=j}^k c(k-1,i-1) S(i,j).
    \]
    But this is precisely the statement of Lemma \ref{lemma:stirling-both}.
\end{proof}

\bibliographystyle{alpha}
\bibliography{bibliography.bib}

@article{jensen2021motivic,
  title={The motivic zeta functions of a matroid},
  author={Jensen, David and Kutler, Max and Usatine, Jeremy},
  journal={Journal of the London Mathematical Society},
  volume={103},
  number={2},
  pages={604--632},
  year={2021},
  publisher={Wiley Online Library}
}

@article{denef1992caracteristiques,
  title={Caract{\'e}ristiques d'Euler-Poincar{\'e}, fonctions z{\^e}ta locales et modifications analytiques},
  author={Denef, Jan and Loeser, Fran{\c{c}}ois},
  journal={Journal of the American Mathematical Society},
  pages={705--720},
  year={1992},
  publisher={JSTOR}
}

@article{feichtner2004incidence,
  title={Incidence combinatorics of resolutions},
  author={Feichtner, Eva-Maria and Kozlov, Dmitry N},
  journal={Selecta Mathematica},
  volume={10},
  number={1},
  pages={37},
  year={2004},
  publisher={Springer}
}

@article{van2019combinatorial,
  title={Combinatorial analogs of topological zeta functions},
  author={van der Veer, Robin},
  journal={Discrete Mathematics},
  volume={342},
  number={9},
  pages={2680-2693},
  year={2019},
  publisher={Elsevier}
}

@book{white1987combinatorial,
author = {Neil White}, place={Cambridge}, series={Encyclopedia of Mathematics and its Applications}, title={Combinatorial Geometries}, DOI={10.1017/CBO9781107325715}, publisher={Cambridge University Press}, year={1987}, collection={Encyclopedia of Mathematics and its Applications}}

@book{knuth1994concrete,
author = {Graham, Ronald L. and Knuth, Donald E. and Patashnik, Oren},
title = {Concrete Mathematics: A Foundation for Computer Science},
year = {1994},
isbn = {0201558025},
publisher = {Addison-Wesley Longman Publishing Co., Inc.},
address = {USA},
edition = {2nd}
}

@book {oxley2011book,
    AUTHOR = {Oxley, James},
     TITLE = {Matroid theory},
    SERIES = {Oxford Graduate Texts in Mathematics},
    VOLUME = {21},
   EDITION = {Second},
 PUBLISHER = {Oxford University Press, Oxford},
      YEAR = {2011},
     PAGES = {xiv+684},
      ISBN = {978-0-19-960339-8},
   MRCLASS = {05-01 (05B35 90C27)},
  MRNUMBER = {2849819},
MRREVIEWER = {Maruti M. Shikare},
       DOI = {10.1093/acprof:oso/9780198566946.001.0001},
       URL = {https://doi-org.ezproxy.uky.edu/10.1093/acprof:oso/9780198566946.001.0001},
}

@article{vanderveer2019,
   title={Combinatorial analogs of topological zeta functions},
   volume={342},
   ISSN={0012-365X},
   url={http://dx.doi.org/10.1016/j.disc.2019.05.035},
   DOI={10.1016/j.disc.2019.05.035},
   number={9},
   journal={Discrete Mathematics},
   publisher={Elsevier BV},
   author={van der Veer, Robin},
   year={2019},
   month={Sep},
   pages={2680?2693}
}

\end{document}